\setlist[itemize]{leftmargin=*}
\theoremstyle{plain}
\newtheorem{theorem}{Theorem}[section]
\newtheorem{lemma}[theorem]{Lemma}
\newtheorem{proposition}[theorem]{Proposition}
\theoremstyle{definition}
\theoremstyle{remark}
\newtheorem*{remark}{Remark}
\numberwithin{equation}{section}
\newcommand\numberthis{\stepcounter{equation}\tag{\theequation}}
\newcommand{\ssum}[1]{\sum_{\substack{#1}}}
\newcommand{\e}{{\rm e}}
\newcommand{\dd}{\mathop{}\!\mathrm{d}}
\newcommand{\ee}{{\varepsilon}}
\newcommand{\C}{{\mathbb C}}
\newcommand{\E}{{\mathbb E}}
\newcommand{\D}{{\mathbb D}}
\newcommand{\N}{{\mathbb N}}
\renewcommand{\P}{{\mathbb P}}
\newcommand{\R}{{\mathbb R}}
\renewcommand{\S}{{\mathbf S}}
\newcommand{\Z}{{\mathbb Z}}
\newcommand{\1}{{\mathbf 1}}
\newcommand{\0}{{\mathbf 0}}
\newcommand{\cC}{{\mathcal C}}
\newcommand{\clF}{{\mathcal F}}
\newcommand{\cH}{{\mathcal H}}
\newcommand{\I}{{\mathcal I}}
\newcommand{\cU}{{\mathscr U}}
\newcommand{\cV}{{\mathcal V}}
\newcommand{\cW}{{\mathcal W}}
\newcommand{\F}{{\mathcal F}}
\newcommand{\FI}{{\mathcal F}_\triangleright}
\newcommand{\B}{{\mathcal B}}
\newcommand{\BI}{{\mathcal B}_\triangleright}
\let\H\relax
\DeclareMathOperator{\H}{{\mathbb H}}
\DeclareMathOperator{\U}{\mathbb{U}}
\DeclareMathOperator{\G}{{\mathcal G}}
\newcommand{\hf}{\tfrac12}
\newcommand{\hlf}{\frac12}
\DeclareMathOperator{\Id}{Id}
\DeclareMathOperator{\im}{Im}
\DeclareMathOperator{\QM}{?}
\DeclareMathOperator{\Res}{Res}
\DeclareMathOperator{\srd}{srd}
\DeclareMathOperator{\id}{id}
\DeclareMathOperator{\Prob}{{\mathbb P}}
\DeclareMathOperator{\Exp}{{\mathbb E}}
\DeclareMathOperator{\Var}{{\mathbb V}}
\renewcommand{\tilde}{\widetilde}
\renewcommand{\hat}{\widehat}
\newcommand{\psih}{{\hat\psi}}
\newcommand{\floor}[1]{{\left\lfloor {#1} \right\rfloor}}
\renewcommand\Re{\operatorname{Re}}
\newcommand{\ds}{\displaystyle}
\title{Statistical distribution of the Stern sequence}
\date{\today}
\subjclass[2010]{Primary 11B57; Secondary 37C30, 37A45}
\keywords{Stern diatomic sequence, transfer operator, central limit theorem}
\author{S. Bettin}
\address{SB: Dipartimento di Matematica, Universit\`a di Genova, via Dodecaneso 35, 16146 Genova, Italy}
\email{bettin@dima.unige.it}
\author{S. Drappeau}
\address{SD: Aix Marseille Universit\'e, CNRS, Centrale Marseille, I2M UMR 7373, 13453 Marseille, France}
\email{sary-aurelien.drappeau@univ-amu.fr}
\author{L. Spiegelhofer}
\address{LS: Institut f\"ur Diskrete Mathematik und Geometrie, TU Wien, Wiedner Hauptstrasse 8--10, 1040 Wien, Austria}
\email{lukas.spiegelhofer@tuwien.ac.at}
\begin{document}

\begin{abstract}
We prove that the Stern diatomic sequence is asymptotically distributed according to a normal law, on a logarithmic scale. This is obtained by studying complex moments, and the analytic properties of a transfer operator. 
\end{abstract}

\maketitle

\section{Introduction}

The Stern diatomic sequence~\cite{Stern} is the sequence defined by the particularly simple recurrence relation
\begin{equation}
s(0)=0,\qquad s(1)=1, \qquad s(2n) = s(n), \qquad s(2n+1) = s(n)+s(n+1)\label{eq:recur-stern-intro}
\end{equation}
for all~$n\geq 1$. The first few terms are
$$ (s(n))_{n\geq 0} = (0, 1, 1, 2, 1, 3, 2, 3, 1, 4, 3, 5, 2, 5, 3, 4, 1, \dotsc). $$
It is an example of a~$2$-regular sequence~\cite[Chapter~16, Exercise~32]{AS-Automatic}, and enjoys various connections with mathematical objects. As important examples, the Stern sequence is related to the Stern-Brocot and the Calkwin-Wilf tree.

Starting from the set~$F_0=\{\tfrac01, \tfrac11\}$, for each~$N\in \N_{\geq 0}$, let~$F_{N+1}$ be built from~$F_N$ by inserting, between any two consecutive fractions~$\tfrac ab$ and~$\tfrac cd$, its median~$\tfrac{a+c}{b+d}$.

\begin{figure}[h]
\centering
\begin{tikzpicture} 
\tikzstyle{old}=[color=darkgray]
\tikzstyle{new}=[]

\node[new] (a1) at (-4,4.5) {$\tfrac01$};
\node[new] (a2) at ( 4,4.5) {$\tfrac11$};

\node[old] (b1) at (-4,3) {$\tfrac01$};
\node[new] (b2) at ( 0,3) {$\tfrac12$};
\node[old] (b3) at ( 4,3) {$\tfrac11$};

\node[old] (c1) at (-4,1.5) {$\tfrac01$};
\node[new] (c2) at (-2,1.5) {$\tfrac13$};
\node[old] (c3) at ( 0,1.5) {$\tfrac12$};
\node[new] (c4) at ( 2,1.5) {$\tfrac23$};
\node[old] (c5) at ( 4,1.5) {$\tfrac11$};

\node[old] (d1) at (-4,0) {$\tfrac01$};
\node[new] (d2) at (-3,0) {$\tfrac14$};
\node[old] (d3) at (-2,0) {$\tfrac13$};
\node[new] (d4) at (-1,0) {$\tfrac25$};
\node[old] (d5) at ( 0,0) {$\tfrac12$};
\node[new] (d6) at ( 1,0) {$\tfrac35$};
\node[old] (d7) at ( 2,0) {$\tfrac23$};
\node[new] (d8) at ( 3,0) {$\tfrac34$};
\node[old] (d9) at ( 4,0) {$\tfrac11$};

\draw (a1) -- (b2) -- (a2);

\draw (b1) -- (c2) -- (b2);
\draw (b2) -- (c4) -- (b3);

\draw (c1) -- (d2) -- (c2);
\draw (c2) -- (d4) -- (c3);
\draw (c3) -- (d6) -- (c4);
\draw (c4) -- (d8) -- (c5);

\node (f0) at (-6,4.5) {$F_0$:};
\node (f1) at (-6,  3) {$F_1$:};
\node (f2) at (-6,1.5) {$F_2$:};
\node (f3) at (-6,  0) {$F_3$:};

\end{tikzpicture} 
\caption{The first four rows of the Stern-Brocot tree}
%\label{fig:sbtree}
\end{figure}

We may present the resulting construction as an infinite ``tree'', labelled by rationals in~$[0,1]$, which is known as the Stern-Brocot tree\footnote{Classical constructions start from~$\{\tfrac 01, \tfrac 10\}$, which makes little difference for our purposes.}: see for instance chapter 4.5 of~\cite{GKP} or section~1.5.1 of~\cite{KMS}.
The numerators and denominators appearing in~$F_N$, ordered by size of the fraction, are respectively the values of $s(m)$ and $s(m+2^N)$, for~$m\in[0,2^N]$.

Let~$\I_N = \Z \cap [2^N, 2^{N+1})$. In this paper, we will study properties of the values~$s(n)$ for~$n\in\I_N$: as we mentioned above, these are the denominators of the elements of the~$N$-th row of the Farey tree described above. Several properties of~$(s(n))_{n\in\I_N}$ were recorded by Stern~\cite{Stern} and Lehmer~\cite{Lehmer}.
There has been much interest in understanding the structure of the largest values of~$s(n)$~\cite{CT, Defant, Lansing, Paulin, CS}: as Lehmer showed, we have~$\max_{n\in\I_N} s(n) = F_{N+2}$, where~$(F_r)_r$ is the Fibonacci sequence. Recently, Paulin~\cite{Paulin} gave a complete description of the~$\floor{N/2}$ largest values taken by~$(s(n))_{n\in\I_N}$: they are given by various combinations of Fibonacci numbers.

\bigskip{}

Another interpretation of the Stern sequence can be obtained from the \emph{Calkin-Wilf} tree~\cite{CW}: it is the infinite binary tree, labelled by positive rationals in reduced form, starting from~$\frac11$, and where each node~$\frac ab$ has children~$\frac{a+b}b$ and~$\frac a{a+b}$. Each positive rational appears exactly once. In this case the denominators appearing at level $N$ are the values of $s(m+2^N)$ for~$m\in[0,2^N)$.

\begin{figure}[h]
\centering
\caption{First four rows of the Calkin-Wilf tree}
\label{fig:cwtree}
\tikzset{blank/.style={draw=none},
         edge from parent/.style=
         {draw,edge from parent path={(\tikzparentnode) -- (\tikzchildnode)}}}
\begin{tikzpicture}
\node {$\tfrac11$} [sibling distance=6cm]
child { node {$\tfrac21$} [sibling distance=3cm]
  child { node {$\tfrac31$} [sibling distance=1.5cm]
    child { node {$\tfrac41$} }
    child { node {$\tfrac34$} }
  }
  child { node {$\tfrac23$} [sibling distance=1.5cm]
    child { node {$\tfrac53$} }
    child { node {$\tfrac25$} }
  }
}
child { node {$\tfrac12$} [sibling distance=3cm]
  child { node {$\tfrac32$} [sibling distance=1.5cm]
    child { node {$\tfrac52$} }
    child { node {$\tfrac35$} }
  }
  child { node {$\tfrac13$} [sibling distance=1.5cm]
    child { node {$\tfrac43$} }
    child { node {$\tfrac14$} }
  }
};
\end{tikzpicture}
\end{figure}

In the present paper, we are interested in the question of the \emph{statistical} distribution of values of~$s(n)$. A relevant setting consists in endowing, for each~$N\geq 0$, the finite set~$\I_N=\Z\cap[2^N, 2^{N+1})$ with the uniform probability measure; let~$\S_N$ be the random variable
$$ \S_N = s(n) $$
where~$n$ is taken uniformly randomly in~$\I_N$. Another way to look at this is the following: start from the root of the Calkin-Wilf, and follow a walk of~$N$ steps down the tree, choosing the left or right child with equal probability. Then the random variable~$\S_N$ is the denominator of the fraction eventually encountered.

Our main result is the following effective central limit theorem for~$\log\S_N$.

\begin{theorem}\label{th:distrib}
For some constants~$\alpha, \sigma>0$, as~$N$ tends to infinity, the values~$(\log s(n))$ are asymptotically distributed according to a Gaussian law, with mean~$\alpha N$ and variance~$\sigma^2 N$: for~$t\in\R$ satisfying~$t=O(N^{1/6})$, we have
\begin{equation}
\Prob_N\Biggl[\frac{\log\S_N - \alpha N}{\sigma \sqrt{N}} \leq t \Biggr]
= \int_{-\infty}^t \frac{\e^{-v^2/2}\dd v}{\sqrt{2\pi}} + O\bigg(\frac{(1+t^2)\e^{-t^2/2}}{\sqrt{N}}\bigg).\label{eq:estim-clt}
\end{equation}
Moreover, for some~$(\nu_1, \nu_2) \in\R^2$ and~$\theta\in[0,1)$, we have
\begin{align*}
\numberthis\label{eq:expect}   \Exp_N\bigl[ \log \S_N \bigr] {}& = \alpha N + \nu_1 + O(\theta^N), && (\text{\cite{Bacher}}) \hspace{-5em}\\
%\numberthis\label{eq:variance}
\Var_N\bigl[ \log \S_N \bigr] {}& = \sigma^2 N + \nu_2 + O(N\theta^N).
\end{align*}
\end{theorem}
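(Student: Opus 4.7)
The plan is to study the Laplace transform of $\log\S_N$ via iterates of a complex transfer operator, and to deduce the Gaussian limit from a quasi-power theorem in the spirit of Hwang.

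First I would exploit the classical correspondence between binary expansions and continued fractions: writing $n\in\I_N$ in binary and grouping consecutive equal bits into runs of lengths $(a_1,\dots,a_k)$ of total length $N+1$, the Calkin--Wilf walk from the root to~$n$ ends at a rational whose denominator $s(n)$ is the denominator of the continued fraction $[a_1,a_2,\dots,a_k]$. Consequently $\log s(n)$ is essentially a Birkhoff sum for the Gauss map, with the peculiarity that the ``time parameter''~$N$ counts binary digits, not continued-fraction quotients.

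The heart of the argument is a twisted Gauss--Kuzmin--Wirsing transfer operator: in a convenient normalisation,
$$
\cL_s f(x) = \sum_{a\geq 1}\frac{1}{(a+x)^{2(1+s)}}\, f\Big(\frac{1}{a+x}\Big),
$$
acting on a Hardy-type space of holomorphic functions on a disk containing~$[0,1]$. For $s$ in a small complex neighbourhood of~$0$, the family $\{\cL_s\}$ is a small analytic perturbation of the unperturbed operator $\cL_0$, is nuclear (hence quasi-compact), and possesses a simple isolated dominant eigenvalue~$\lambda(s)$, analytic in~$s$, with $\lambda(0)=1$ and a uniform spectral gap. A careful dynamical coding then yields an identity of the form
$$
\Exp_N\bigl[\S_N^{-2s}\bigr] = \bigl\langle \cL_s^{\,N}\1,\, h_s\bigr\rangle
$$
for an explicit test functional~$h_s$ depending holomorphically on~$s$, and spectral perturbation theory gives
$$
\Exp_N\bigl[\S_N^{-2s}\bigr] = C(s)\, \lambda(s)^N\bigl(1 + O(\theta^N)\bigr),
$$
uniformly for~$s$ in a neighbourhood of~$0$, with $\theta\in[0,1)$ controlling the spectral gap.

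Granted this quasi-power asymptotic, setting $\alpha$ and $\sigma^2$ in terms of $\lambda'(0)$ and $\lambda''(0)$ (with signs fixed by the identity above), the Gaussian convergence~\eqref{eq:estim-clt}---including the uniform Edgeworth-type tail refinement with weight $(1+t^2)\e^{-t^2/2}$ in the range $|t|=O(N^{1/6})$---follows by running the standard saddle-point/Berry--Esseen machinery that underlies Hwang's quasi-power theorem. The asymptotics for $\Exp_N[\log\S_N]$ and $\Var_N[\log\S_N]$ in~\eqref{eq:expect} are then read off by differentiating $\log\Exp_N[\S_N^{-2s}]$ once and twice at~$s=0$, the exponentially small remainders $O(\theta^N)$ and $O(N\theta^N)$ being inherited from the error term above.

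The principal obstacle lies in the spectral analysis of~$\cL_s$: selecting a Banach space on which the family $\{\cL_s\}$ is quasi-compact uniformly in~$s$, proving analyticity of~$\lambda(s)$ and of the corresponding spectral projector, and, most critically, establishing a quantitative spectral gap that remains uniform on a complex disk around~$0$. An additional subtlety, absent from the standard Gauss--Kuzmin CLT, is that $N$ here records the binary length and not the number of continued-fraction quotients; the identification of $\Exp_N[\S_N^{-2s}]$ with pure $\cL_s^N$-iterates therefore requires either an inducing argument bridging the Farey and Gauss codings, or a direct use of a Farey-type transfer operator together with an appropriate spectral renormalisation.
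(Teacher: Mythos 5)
Your outline correctly identifies the main structural ingredients --- a twisted Gauss--Kuzmin--Wirsing operator, quasi-compactness and a spectral gap on a Hardy-type space of holomorphic functions, analytic perturbation of the dominant eigenvalue, a quasi-power asymptotic, and Hwang's theorem for the Berry--Esseen refinement. But there is a genuine gap at the very centre: the claimed identity $\Exp_N\bigl[\S_N^{-2s}\bigr] = \langle \cL_s^N\1, h_s\rangle$ is false for the Gauss-type operator $\cL_s$ you wrote down. Iterating a Gauss transfer operator $N$ times amounts to summing over continued fractions with $N$ partial quotients, whereas here $N$ is the \emph{binary length} of $n\in\I_N$, which equals the \emph{sum} $a_1+\dots+a_k$ of the partial quotients, not their number $k$. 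You flag this as the ``principal obstacle'' and propose either an inducing argument or a Farey-type operator as an escape route, but neither is carried out, and the second one is genuinely problematic: the Farey transfer operator $\mathbb L[f](x)=\tfrac12 f\bigl(\tfrac1{x+1}\bigr)+\tfrac12 f\bigl(\tfrac x{x+1}\bigr)$ has a neutral fixed point at $0$, which destroys the uniform spectral gap your argument relies on.

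The paper resolves exactly this point by introducing a \emph{second} complex variable $z$ and working with the bivariate operator
$$
\H_{\tau,z}[f](t) = \sum_{n\ge 1}\frac{z^n}{(n+t)^\tau}\,f\Bigl(\frac1{n+t}\Bigr),
$$
in which $z^n$ bookkeeps the contribution of each partial quotient $n$ to the binary length. Summing over the number of Gauss iterates $k$ and extracting coefficients via Cauchy's formula from the generating series $S_\tau(z)=\frac1{1-z}(\Id-\H_{\tau,z})^{-1}[\1](1)$ shifts the problem from iterating a fixed operator $N$ times to locating the dominant simple pole of $z\mapsto S_\tau(z)$ at $z=\rho(\tau)$, where $\lambda(\tau,\rho(\tau))=1$. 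This is precisely the ``appropriate spectral renormalisation'' you allude to, but it is not a routine add-on --- it is the device that circumvents the Farey-map pathology while keeping the analysis on the Gauss side, and your proposal would not go through without it. Once you incorporate the bivariate generating function, the rest of your plan (analyticity of $\lambda$ and $\rho$, positivity of the variance via a cohomological/pressure argument, Hwang's quasi-power theorem) lines up with the paper.
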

\begin{remark}
\begin{itemize}
\item This answers a question posed in~\cite{Lansing}; the error term in~\eqref{eq:estim-clt} is relevant and optimal in the whole central limit range~$t=O(N^{1/6})$.
\item The constants~$\alpha$ and~$\sigma$ are numerically close to
$$ \alpha \approx 0.396212\dotsc, \qquad \sigma \approx 0.148905\dotsc $$
Both constants admit expressions as integrals of elementary functions with respect to a certain singular measure, the Minkowski measure (see formulas~\eqref{eq:alpha-log} and~\eqref{eq:sigma2-explicit}). They are studied in more detail in Section~\ref{sec:mean-value-alpha} below.
\item Formula~\eqref{eq:expect} was proved very recently by Bacher~\cite[Theorem~12.1]{Bacher} with a strong quantitative error term~$O(2^{-N})$. The approach used there does not require complex analysis, but is ineffective for other moments than the first.
\end{itemize}
\end{remark}

We will in fact prove that small complex moments of~$\S_N$ admit a quasi-powers expansion in the sense of Hwang~\cite{Hwang1,Hwang2}, meaning that $\Exp_N\bigl[(\S_N)^\tau\bigr]$ behaves asymptotically as~$A(\tau)^N B(\tau)$, for some holomorphic functions~$A, B$ in the neighborhood of the origin.

\begin{theorem}\label{th:moments}
For some~$\eta>0$ and~$\theta\in[0,1)$, there exist holomorphic functions~$U$, $V$ on the disc~$\{\tau\in\C:\ |\tau|\leq \eta\}$ such that
\begin{equation}
\Exp_N\bigl[(\S_N)^\tau\bigr] = \exp\big\{ N U(\tau) + V(\tau)\big\}\big(1 + O(\theta^N)\big),\label{eq:estim-moments}
\end{equation}
uniformly in $N\geq 1$ and $|\tau|\leq \eta$. Moreover,~$U''(0)\neq 0$.
\end{theorem}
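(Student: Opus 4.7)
The plan is to express the moments $\Exp_N[\S_N^\tau]$ in terms of iterates of a transfer operator and to derive the quasi-powers expansion from perturbation theory applied to its dominant eigenvalue.

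First, I would reformulate the problem dynamically. For $n \in \I_N$ with binary expansion $n = (1\, b_1 \cdots b_N)_2$, the pair $(s(n), s(n+1))$ is a product of the matrices $L = \bigl(\begin{smallmatrix}1&0\\1&1\end{smallmatrix}\bigr)$ and $R = \bigl(\begin{smallmatrix}1&1\\0&1\end{smallmatrix}\bigr)$ read off the bits; grouping bits into maximal runs gives a composition $\ba = (a_1,\ldots,a_r)$ of $N$, and a direct computation identifies $s(n)$ with the continuant $K(a_1,\ldots,a_r)$, i.e.\ the denominator of the continued fraction $[0;a_1,\ldots,a_r]$. Each such composition arises from exactly two binary words (according to the starting bit), so
\begin{equation*}
Z_N(\tau) := \sum_{n \in \I_N} s(n)^\tau = 2 \sum_{r \geq 1} \ssum{a_1 + \cdots + a_r = N \\ a_i \geq 1} K(a_1,\ldots,a_r)^\tau.
\end{equation*}

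Next, I would introduce the transfer operator for the Gauss map. Let $\cD \subset \C$ be a bounded open disc containing $[0,1]$ such that each inverse branch $\phi_k(x) = 1/(k+x)$ sends $\overline{\cD}$ strictly inside $\cD$. For complex parameters $(\tau, z)$ in a neighbourhood of $(0, \tfrac12)$, define
\begin{equation*}
(\cL_{\tau,z} f)(x) = \sum_{k \geq 1} z^k \, |\phi_k'(x)|^{-\tau/2} \, f(\phi_k(x))
\end{equation*}
on the Hardy space $H^2(\cD)$. Using the chain-rule identity $K(a_1,\ldots,a_r) = |(\phi_{a_1}\circ\cdots\circ\phi_{a_r})'(0)|^{-1/2}$, the generating series $F(z,\tau) := \sum_{N\geq 1} Z_N(\tau) z^N$ can be written as a resolvent of the form $\langle (I - \cL_{\tau,z})^{-1} g_\tau,\, h_\tau \rangle$ for simple analytic test functions. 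The operator $\cL_{\tau,z}$ is nuclear on $H^2(\cD)$ and depends holomorphically on $(\tau, z)$; at $(0,\tfrac12)$ it is, up to normalisation, the classical Perron-Frobenius-Ruelle operator for the Gauss map, whence by the Krein-Rutman theorem it has a simple dominant eigenvalue equal to $1$ together with a spectral gap. Standard perturbation theory produces, for $(\tau, z)$ near $(0,\tfrac12)$, an analytic simple dominant eigenvalue $\lambda(\tau, z)$ with $\lambda(0, \tfrac12) = 1$, and the implicit function theorem yields an analytic curve $\rho(\tau)$ with $\rho(0)=\tfrac12$ solving $\lambda(\tau, \rho(\tau)) = 1$.

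It then follows that $F(\cdot, \tau)$ is meromorphic in a disc slightly larger than $|z| \leq \rho(\tau)$ with a simple pole at $z = \rho(\tau)$ of analytic residue $R(\tau)$. A standard contour shift gives
\begin{equation*}
Z_N(\tau) = R(\tau)\, \rho(\tau)^{-N-1} + O\bigl( (\rho(\tau) + \delta)^{-N} \bigr)
\end{equation*}
for some $\delta > 0$ uniform in $|\tau| \leq \eta$, which upon dividing by $2^N$ yields~\eqref{eq:estim-moments} with $U(\tau) = -\log(2 \rho(\tau))$ and $V(\tau) = \log (R(\tau)/(2\rho(\tau)))$, both analytic near $0$. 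Finally, $U''(0) \neq 0$ amounts to the assertion that $\log|\phi'|$ is not cohomologous to a constant under the Gauss dynamics; this can be verified by comparing the Lyapunov exponents along the two periodic orbits with partial quotients $\overline{1}$ and $\overline{2}$, which are distinct algebraic numbers. The main obstacle is the uniform spectral analysis in the previous step: because the Farey map has a neutral fixed point at $0$, its direct transfer operator lacks a spectral gap on any natural Banach space, which forces the detour through the induced Gauss operator with the auxiliary variable $z$ tracking $N = a_1 + \cdots + a_r$. Proving uniform nuclearity, quasi-compactness, and the required holomorphic dependence on $(\tau, z)$ near the critical point is the technical heart of the argument, and fits within the framework of Mayer and Vallée on the Gauss transfer operator.
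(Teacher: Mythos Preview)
Your overall strategy---generating series, Gauss transfer operator with an auxiliary variable~$z$, perturbation of the dominant eigenvalue, contour shift---is exactly the paper's, and the recognition that one must induce to the Gauss map to bypass the Farey neutral fixed point is correct. Two points deserve attention.

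\medskip

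\textbf{The reference operator is not the classical Gauss operator.} You write that at~$(\tau,z)=(0,\tfrac12)$ the operator~$\cL_{\tau,z}$ is ``up to normalisation, the classical Perron--Frobenius--Ruelle operator for the Gauss map''. It is not: the weights are~$2^{-k}$, whereas the Gauss--Kuzmin--Wirsing operator has weights~$|\phi_k'(x)|=(k+x)^{-2}$. The eigenfunction of~$\cL_{0,1/2}$ is the constant~$\1$ and the eigenmeasure is the \emph{Minkowski} measure~$\dd\Psi$, not the Gauss measure~$\tfrac{\dd x}{(1+x)\log 2}$. So you cannot simply invoke known results about the Gauss operator; the spectral gap for~$\cL_{0,1/2}$ has to be established directly (the paper does this by a short maximum-modulus argument on~$H^\infty(\D)$, together with compactness from Montel's theorem). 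This also affects your combinatorial identity: the claimed equality~$Z_N(\tau)=2\sum_{\ba\vDash N}K(\ba)^\tau$ is not literally correct (check~$N=2$), because~$s(n)$ does not depend only on the run-length composition. The paper instead evaluates at~$1$ rather than~$0$, obtaining~$s(2^N+m)=|h'(1)|^{-1/2}$ for odd~$m$ with~$w(h)=N$, and absorbs the even part of~$m$ into the prefactor~$(1-z)^{-1}$.

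\medskip

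\textbf{The nondegeneracy argument.} Your periodic-orbit idea for~$U''(0)\neq 0$ is valid and in fact cleaner than the paper's route, but the criterion is not quite ``$\log|\phi'|$ is not cohomologous to a constant''. Because~$\rho(\tau)$ is defined implicitly by~$\lambda(\tau,\rho(\tau))=1$, the vanishing of~$U''(0)$ is equivalent to the function~$\psi(x)=\lfloor 1/x\rfloor+\alpha^{-1}\log x$ being a \emph{coboundary} for the Gauss map (here~$\lfloor 1/x\rfloor$ comes from differentiating in~$z$). Your two fixed points then do the job: at~$x_1=(\sqrt5-1)/2$ one gets~$\alpha=\log\tfrac{1+\sqrt5}{2}$, at~$x_2=\sqrt2-1$ one gets~$\alpha=\tfrac12\log(1+\sqrt2)$, and these disagree. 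The paper instead constructs an explicit solution~$\chi_1$ to the cohomological equation and shows~$\int(\psi+\chi_1-\chi_1\circ\G)^2\dd\mu>0$ because~$\psi$ is unbounded near~$0$; this yields the closed formula~\eqref{eq:sigma2-explicit} for~$\sigma^2$, which your argument does not.
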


This kind of estimates gives rise to the asymptotic normal law described in Theorem~\ref{th:distrib}, with speed of convergence, but also to bounds on probabilities of large deviations, and asymptotic formulae for~$\Exp[(\log \S_N)^k]$, with error term~$O_k(N^{k-1}\theta^N)$, for any fixed~$k\geq 1$. We refer to the above-quoted papers of Hwang~\cite{Hwang1,Hwang2}, and to Chapter IX.5 of~\cite{FS}, for more explanations.

In principle, explicit estimates for~$\theta$ could be obtained numerically. Experiments seem to suggest that any fixed~$\theta>1/2$ is admissible, at the cost of reducing the value of~$\eta$ accordingly.

\section{Overview}

To place Theorem~\ref{th:distrib} into context, we quote from~\cite{CW} the fact that~$(s(n+1)/s(n))_{n\in \I_N}$ is the~$N$-th row of the Calkin-Wilf tree (Figure~\ref{fig:cwtree}). It is easy to deduce from this the expression
\begin{equation}
\begin{pmatrix} s(n+1) \\ s(n) \end{pmatrix} = A_{\ee_0} \dotsb A_{\ee_{N-1}} \begin{pmatrix} 1 \\ 1 \end{pmatrix}\label{eq:stern-product-matrices}
\end{equation}
where~$n = 2^N + \sum_{j=0}^{N-1} \ee_j 2^j$, $\ee_j\in\{0, 1\}$, $A_0 = \left(\begin{smallmatrix} 1 & 1 \\ 0 & 1 \end{smallmatrix}\right)$ and~$A_1 = \left(\begin{smallmatrix} 1 & 0 \\ 1 & 1 \end{smallmatrix}\right)$. Picking~$n\in\I_N$ at random means choosing~$\ee_j\in\{0, 1\}$ independently with equal probability. This is an instance of a problem about random products of matrices, which is a vast and active area of research; we quote the seminal papers~\cite{Bellman, Furstenberg}, and we direct to the recent monograph~\cite{BQ-book} for more references. A general result such as Theorem~1.1 of~\cite{BQ} would indeed yield a slightly weaker version of our estimate~\eqref{eq:estim-clt}. We also refer to~\cite{EH} for a recent work concerned with a related situation (a ``law of large numbers'' for measures satisfying a recurrence relation similar to~\eqref{eq:recur-stern-intro}).

In order to obtain our precise statements, in particular Theorem~\ref{th:moments}, we rely on similar pools of ideas; however we will take a different approach, and cast the arguments in a more direct form, by exploiting a connection between the Stern sequence and the Minkowski question-mark function~\cite[Fig.7]{Minkowski}. As we will see shortly, this function arises naturally in our problem as a conjugacy between the two dynamical systems underlying the recursion formulas~\eqref{eq:recur-stern-intro}: the binary map and the Farey map, describing respectively the transformation rules of~$n$ and~$s(n)$. Let us first describe it following~\cite{Denjoy}, as the function mapping an irrational number~$x\in(0, 1]$ to
\begin{equation}\label{eq:def-minkowski}
\QM(x) := \sum_{n=1}^\infty \frac{(-1)^{n+1}}{2^{a_1+\dotsb+a_n}}, \qquad \text{if } x=[0;a_1,a_2,\dotsc] = \frac1{a_1+\frac1{a_2+\frac1{\dotsb}}}.
\end{equation}
This function is extended as a strictly increasing, continuous bijection from~$[0, 1]$ to itself. Let us define for notational convenience
$$ \Psi(x) := \QM(x), \qquad \Phi(x) := \QM^{-1}(x). $$
These functions are drawn in Figures~\ref{fig:graph-minkowski} and~\ref{fig:graph-conway}.
\begin{subfigures}
\begin{figure}[h]
  \centering
  \begin{minipage}{0.45\textwidth}
    \centering
    \includegraphics[width=0.8\textwidth]{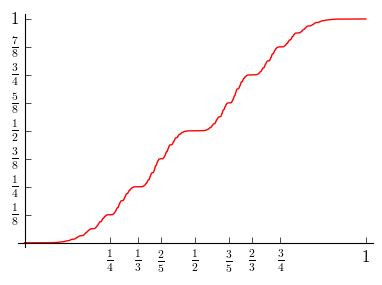}
    \caption{Graph of the Minkowski function~$\Psi(x) = \QM(x)$.}
    \label{fig:graph-minkowski}
  \end{minipage}\hfill
  \begin{minipage}{0.45\textwidth}
    \centering
    \includegraphics[width=0.8\textwidth]{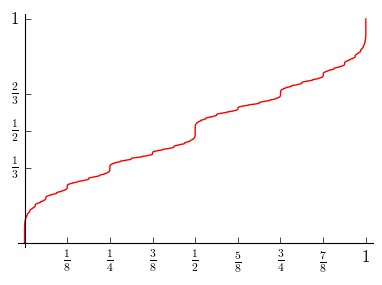}
    \caption{Graph of Conway's box function~$\Phi(x) = \QM^{-1}(x)$.}
    \label{fig:graph-conway}
  \end{minipage}
\end{figure}
\end{subfigures}

The question-mark function was introduced by Minkowski for its properties of mapping rational and quadratic irrational numbers into, respectively, dyadic and non-dyadic rational numbers. By construction it is increasing and continuous, but it is not absolutely continuous with respect to the Lebesgue measure, and in fact it is singular~\cite{Salem}: it has zero derivative for Lebesgue-almost every~$x\in[0, 1]$. It is however H\"older continuous with Lipschitz exponent $\frac{\log 2}{\sqrt 5+1}$.

An important property for us is the fact that the Minkowski function is a topological conjugation between two maps from~$[0, 1]$ to itself, the Farey map and the binary map, which are implicitly at play in the recurrence relation~\eqref{eq:recur-stern-intro}. We will detail this further below in Section~\ref{sec:heuristic-theorem}; a very practical form of this fact is given by the identity~\cite[Proposition~2.1]{Bacher}
$$ \frac{s(m)}{s(2^N+m)} = \Phi\Big(\frac m{2^N}\Big) $$
valid for all~$N\geq 0$ and~$m\in\{0, 1, \dotsc, 2^N\}$.
This formula will be our starting point: it provides a fast algorithm to express~$s(n)$ for~$n\in\I_N$ in terms of product of values of~$\Phi$, and brings the problem into the framework of dynamical analysis of Euclidean algorithms~\cite{Vallee, BV}.

In the rest of this section, we will first gather some facts about the mean-value~$\alpha$ (Section~\ref{sec:mean-value-alpha}), then we provide a naive heuristic towards Theorem~\ref{th:distrib} (Section~\ref{sec:heuristic-theorem}), and finally we outline our proof (Section~\ref{sec:outline-proof}).

\subsection{\texorpdfstring{The mean-value~$\alpha$ and the variance~$\sigma^2$}{The mean-value and the variance}}\label{sec:mean-value-alpha}

The constants~$\alpha$ and~$\sigma$ appearing in the mean-value estimate~\eqref{eq:expect} admit explicit expressions in terms of the Minkowski function. Let~$\mu$ denote the \emph{Minkowski measure} on the interval~$[0, 1]$,
\begin{equation}
\dd \mu(x) = \dd\Psi(x) = \dd(\Phi^{-1}(x)).\label{eq:def-mu}
\end{equation}
Then the constant $\alpha$ is given by
\begin{equation}
\alpha = -\frac12 \int_0^1 (\log x) \dd\mu(x).\label{eq:alpha-log}
\end{equation}
The next theorem records and links several different expressions of the constant~$\alpha$ and~$\sigma$.

\begin{theorem}\label{th:alpha}
\begin{enumerate}[label=(\roman*)]
\item Let~$m_k := \int_0^1 x^k \dd\mu(x)$. Then
\begin{equation}\label{bacher_alpha}
\alpha=\log 2-\sum_{k=1}^\infty\frac{m_k}{k2^k};
\end{equation}
this agrees with the definition given in~\cite[Th\'eor\`eme~12.1]{Bacher}.

\item We have
\begin{equation*}\label{eq:kinney_alpha}
\alpha = \int_0^1 \log(1+x)\dd\mu(x);
\end{equation*}
therefore the~$\mu$-almost-sure Lipschitz exponent~$\beta$ of~$x\mapsto \Psi(x)$, given in~\cite{Kinney}, is related to~$\alpha$ by~$\beta = \frac{\log 2}{2\alpha}$.

\item Consider the system~$([0,1], \clF, \mu)$ given by the Farey map~$\clF(x) = \min\{\tfrac x{1-x}, \tfrac{1-x}x\}$. Then we have
\begin{equation}
2\alpha = \int_0^1 \log\left|\clF'(x)\right|\dd\mu(x); \label{eq:alpha-entropy}
\end{equation}
in this form the constant~$2\alpha$ has an interpretation as a relative metric entropy.

\item\label{list:alpha-lambda1} The constant~$\alpha$ is the maximal Lyapunov exponent~$\lambda_1$ of the measure~$\eta$ on~$GL_2(\R)$ defined by~$\eta = \frac12\delta_{A_0}  + \frac12\delta_{A_1}$, where $A_0 = \left(\begin{smallmatrix} 1 & 1 \\ 0 & 1 \end{smallmatrix}\right)$ and~$A_1 = \left(\begin{smallmatrix} 1 & 0 \\ 1 & 1 \end{smallmatrix}\right)$. An~$\eta$-stationary measure~$\xi$ on~$\P(\R^2) = \R\cup\{\infty\}$ is given by
$$ \dd\xi(x) =  \tfrac12\mu + \tfrac12 T^*\mu\qquad (T(x) = 1/x), $$
where $\mu$ is extended trivially to a measure on $\R\cup\{\infty\}$ with support in $[0,1]$.

\item\label{list:logrho-explicit} The variance~$\sigma^2$ can be expressed as
\begin{equation}
\sigma^2 = \frac12\int_0^1 \bigg(\log x + \alpha\lfloor\tfrac1x\rfloor + \int_0^1 \log\bigg(\frac{1+y\{\tfrac1x\}}{1+yx}\bigg)\dd\mu(y)\bigg)^2 \dd\mu(x).\label{eq:sigma2-explicit}
\end{equation}

\end{enumerate}
\end{theorem}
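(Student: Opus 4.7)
My plan is to derive parts (i)--(iii) from two elementary properties of the Minkowski measure $\mu$, handle (iv) by a stationarity computation combined with Furstenberg--Kesten, and defer (v) to the transfer-operator machinery developed in the proof of Theorem~\ref{th:moments}. The two underlying properties are the reflection symmetry $\mu\circ(x\mapsto 1-x)=\mu$ (inherited from $\Psi(1-x)=1-\Psi(x)$), and the self-similarity induced by the functional equations $\Psi(x/(1+x))=\Psi(x)/2$ and $\Psi(1/(1+x))=1-\Psi(x)/2$, both immediate from the continued-fraction form of $\Psi$. Writing $T_0(x)=x/(1+x)$ and $T_1(x)=1/(1+x)$ for the two inverse branches of the Farey map, this self-similarity becomes
\[
\mu = \tfrac12(T_0)_*\mu + \tfrac12(T_1)_*\mu, \qquad (T_j)_*\mu = 2\mu|_{[j/2,(j+1)/2]}\ \ (j=0,1).
\]

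For (ii), apply the self-similarity to $g(x)=\log x$: using $\log T_0(x)=\log x-\log(1+x)$ and $\log T_1(x)=-\log(1+x)$, one gets $\int\log x\,d\mu=\tfrac12\int\log x\,d\mu-\int\log(1+x)\,d\mu$, which rearranges to $\alpha=-\tfrac12\int\log x\,d\mu=\int\log(1+x)\,d\mu$. The Kinney relation $\beta=\log 2/(2\alpha)$ follows because Kinney's original expression for the almost-sure H\"older exponent is $\log 2/(2\int\log(1+x)\,d\mu)$. For (i), expand $\log(1+x)=\log 2-\sum_{k\geq 1}((1-x)/2)^k/k$ (uniformly convergent on $[0,1]$) and integrate termwise against $\mu$, using the reflection symmetry $\int(1-x)^k\,d\mu=m_k$. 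For (iii), $\log|\clF'|$ equals $-2\log(1-x)$ on $[0,\tfrac12]$ and $-2\log x$ on $[\tfrac12,1]$; substituting $\mu|_{[0,1/2]}=\tfrac12(T_0)_*\mu$ and $\mu|_{[1/2,1]}=\tfrac12(T_1)_*\mu$ in each half yields two copies of $\int\log(1+x)\,d\mu=\alpha$.

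For (iv), I would first verify that $\xi=\tfrac12\mu+\tfrac12 T_*\mu$ is $\eta$-stationary on $\R\cup\{\infty\}$. Using the projective actions $A_0\cdot x=x+1$ and $A_1\cdot x=T_0(x)$ together with the change of variable $y=1/x$, a direct computation shows $(A_1)_*\xi=\mu$ and $(A_0)_*\xi=T_*\mu$; each identity reduces after unfolding to the self-similarity above. Once stationarity is established, the Furstenberg--Kesten theorem yields $\lambda_1=\lim_N N^{-1}\Exp_N[\log\|A_{\ee_0}\dotsb A_{\ee_{N-1}}\|]$. Since $A_0$ and $A_1$ preserve the positive cone in $\R^2$, the operator norm is comparable up to a constant factor to $\|A_{\ee_0}\dotsb A_{\ee_{N-1}}(1,1)^t\|=s(n+1)+s(n)\asymp\S_N$, and \eqref{eq:expect} then identifies $\lambda_1=\alpha$.

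Part (v) is the main obstacle and relies on the spectral machinery developed to prove Theorem~\ref{th:moments}. The key input will be the identity $\sigma^2=U''(0)$, where $U(\tau)$ is the log of the dominant eigenvalue of a $\tau$-parametrized transfer operator $\cL_\tau$ on a suitable function space, and the eigenfunction at $\tau=0$ recovers the density of $\mu$. Standard Kato-type perturbation theory expresses $U''(0)$ through the first and second derivatives of $\cL_\tau$ and of its leading eigenfunction at $\tau=0$. To match this with the explicit formula~\eqref{eq:sigma2-explicit}, I would recognize the Gauss-map structure $\lfloor 1/x\rfloor$, $\{1/x\}$ in the integrand as reflecting that $\cL_\tau$ packages one full continued-fraction digit (i.e.\ a maximal block of consecutive equal binary digits), and then identify the first-order shift of the eigenfunction with the logarithmic correction $\int\log\frac{1+y\{1/x\}}{1+yx}\,d\mu(y)$. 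The most delicate bookkeeping step will be extracting this coboundary term cleanly alongside the ``main'' centered observable $\log x+\alpha\lfloor 1/x\rfloor$.
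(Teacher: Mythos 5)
Parts (i)--(iii), and the stationarity claim in (iv), are correct, and you reach them by a mildly different route than the paper: you work directly from the functional equations $\Psi(x/(1+x))=\Psi(x)/2$, $\Psi(1/(1+x))=1-\Psi(x)/2$ (self-similarity of $\mu$ under the two Farey inverse branches), whereas the paper runs everything through the Gauss-map operator $\H$ and its eigenmeasure property \eqref{eq:eigenmeasure} (Lemma~\ref{lem:alpha-log1px-log}, plus the identity $\H[t\mapsto \1_{t>1/2}\log t](x)=-\frac12\log(1+x)$ for (iii)). The two are equivalent --- grouping the two Farey branches iteratively produces the Gauss branches with weights $2^{-n}$ --- and your version is a bit more elementary and self-contained. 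For the Lyapunov exponent in (iv) you genuinely diverge: the paper evaluates Furstenberg's formula \eqref{eq:expr-lambda1} against the explicit stationary measure $\xi$ and checks the integral equals $\alpha$, which identifies $\alpha=\lambda_1$ independently of Theorem~\ref{th:distrib}; you instead invoke Furstenberg--Kesten together with \eqref{eq:expect}. That is legitimate, but one line is overstated: $s(n)+s(n+1)\asymp \S_N$ with an absolute constant is false, since $s(n+1)/s(n)$ attains the value $N+1$ on row $N$ of the Calkin--Wilf tree. What is true, and suffices, is $s(n)\le s(n)+s(n+1)\le (N+2)\,s(n)$, so the discrepancy is $O(\log N)$ after taking logarithms and vanishes in the limit defining $\lambda_1$; please repair that step.

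Part (v) is where the proposal has a genuine gap: what you write is a plan, and the step you defer (``extracting this coboundary term cleanly'') is precisely the entire content of \eqref{eq:sigma2-explicit}. Kato perturbation theory gives the second derivative of the dominant eigenvalue in terms of $\partial^2_\tau\lambda$ and the derivative of the eigenfunction, but it does not by itself yield a closed single-integral formula; for that one needs an explicit solution of the cohomological equation $\chi=\H[\psi+\chi]$ with $\psi(x)=\lfloor 1/x\rfloor+\alpha^{-1}\log x$. The paper does exactly this in Proposition~\ref{prop:pole} (following Benoist--Quint): it exhibits $\chi_1(x)=-\alpha^{-1}\int_0^1\log(1+xy)\dd\mu(y)$, verifies $\H[\chi_1]=\chi_1-2-\H[\alpha^{-1}\log]$ so that $\chi-\chi_1$ is constant, and obtains $\ell''(0)=\int_0^1(\psi+\chi_1-\chi_1\circ\G)^2\dd\mu$, which combined with $\sigma^2=-(\log\rho)''(0)=\frac12\alpha^2\ell''(0)$ is \eqref{eq:sigma2-explicit}; note also that the relevant eigenvalue is the two-variable $\lambda(\tau,z)$ with $\rho$ defined implicitly by $\lambda(\tau,\rho(\tau))=1$, and passing to a one-parameter perturbation requires the reparametrization $\ell(w)=\lambda(\alpha^{-1}w,\tfrac12\e^w)$, a point your single-parameter description glosses over. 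Without producing $\chi_1$ and this verification, your outline does not prove (v); if instead you are allowed to quote the machinery of Sections~\ref{sec:analysis-Stz}--\ref{sec:proof-th} as including \eqref{eq:ellsecond-explicit} and \eqref{eq:rel-variance-logrho}, then (v) is a one-line consequence, which is how the paper concludes. Finally, the aside that ``the eigenfunction at $\tau=0$ recovers the density of $\mu$'' should be corrected: $\mu$ is singular and has no density; the eigenfunction at the central parameter is the constant $\1$, and $\mu$ enters as the eigenmeasure of the adjoint operator.
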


\begin{remark}
The above facts and comments are detailed and proven in Section~\ref{sec:alpha}, where we will also show that the results of~\cite{Kinney} can be easily deduced from Theorem~\ref{th:distrib}. The main point in~\ref{list:alpha-lambda1} is the explicit expression for the stationary measure, which allows one to compute~$\alpha$ using a formula of Furstenberg.

Bacher~\cite{Bacher} showed that the moments~$m_k$ can be computed very accurately, allowing him to obtain a 50-digits approximation for~$\alpha$.
\end{remark}

\subsection{Iterates of the binary map, and a heuristic toward Theorem~\ref{th:distrib}}\label{sec:heuristic-theorem}

In this section we derive a formula for~$s(n)$ in terms of the function~$\Phi$, and then state a naive heuristic towards the fact that~$N^{-1} \log \S_N \to \alpha$ in law as~$N\to\infty$.

Consider the function~$\Phi$ defined on the dyadic rationals of~$[0, 1]$ by the formula
\begin{equation}
\Phi\Big(\frac m{2^N}\Big) = \frac{s(m)}{s(m+2^N)}, \qquad \Big(\frac{m}{2^N} \in [0, 1]\Big).\label{eq:link-phi-stern}
\end{equation}
As we have mentioned earlier, the function~$\Phi$ is given by the inversion bijection of Minkowski's function; but let us ignore this for a moment. The recurrence relations~\eqref{eq:recur-stern-intro} are equivalent to the facts that:
\begin{enumerate}
\item The definition~\eqref{eq:link-phi-stern} is well-posed, \emph{i.e.}, it genuinely only depends on the ratio~$m/2^N$,
\item Whenever~$N\geq 0$ and~$0\leq m < 2^N$, we have
\begin{equation}
\Phi\Big(\frac12\Big(\frac m{2^N} + \frac{m+1}{2^N}\Big)\Big) = \frac{s(m)+s(m+1)}{s(m+2^N)+s(m+1+2^N)}.\label{eq:Phi-conj-dyadic}
\end{equation}
\end{enumerate}
Additionally, we recall the fact that the Farey tree described in Figure~\ref{fig:cwtree} enumerates all rational numbers on~$[0, 1]$. Since, by~\eqref{eq:Phi-conj-dyadic}, the function~$\Phi$ is increasing, and~$(\Phi(0), \Phi(1))=(0,1)$, we obtain that~$\Phi$ may be extended to a homeomorphism from~$[0, 1]$ to itself. By these facts, a result of Panti~\cite[Proposition~1.1]{Panti} guarantees that~$\Phi$ must be the inverse of the Minkowski function.

We now iterate the relation~\eqref{eq:link-phi-stern}, in order to express~$s(2^N+m)$ in terms of values of~$\Phi$. Given~$1\leq m < 2^N$, there is a unique~$N'\in\N_{\geq 0}$, with~$N' < N$, such that~$2^{N'} \leq m < 2^{N'+1}$. Write~$m = 2^{N'+1} - m'$, so that~$1\leq m' \leq 2^{N'}$. It was noted by Stern~\cite{Bacher} (see~\cite{Lehmer} for an account) that~$s(m) = s(2^{N'+1}-m') = s(2^{N'}+m')$. Therefore, by~\eqref{eq:link-phi-stern}, we deduce
\begin{equation}
s(2^N+m) = \Phi\Big(\frac m{2^N}\Big)^{-1} s(2^{N'} + m').\label{eq:link-phi-stern-m'}
\end{equation}
It is easily checked that the map~$\BI : m/2^N \mapsto m'/2^{N'}$ is given by
\begin{equation}
\BI(x) = \begin{cases} 2-2^k x & \textnormal {for }x\in [2^{-k}, 2^{-k+1}),\ k\geq1 , \\ 0 &  \textnormal {for }x\in\{0, 1\}. \end{cases}\label{eq:def-map-jump-bin}
\end{equation}
This map is shown on the right, in Figure~\ref{fig:graph-bin}. It is the jump transformation of the binary (or ``tent'') map~$\B(x) = 2\min(x, 1-x)$ on the interval $[\frac12,1]$ (see~\cite{KMS}, fig. 1.8, page 46). The map~$\B$ is shown in Figure~\ref{fig:graph-bin-0}.

\begin{subfigures}
\begin{figure}[h]
  \centering
  \begin{minipage}{0.45\textwidth}
    \centering
    \includegraphics[width=0.8\textwidth]{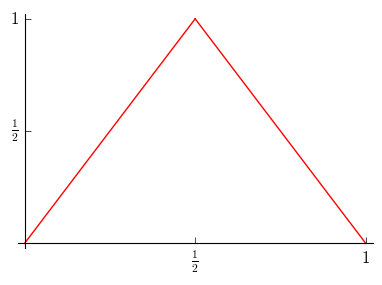}
    \caption{Graph of the binary map~$x\mapsto \B(x)$.}
    \label{fig:graph-bin-0}
  \end{minipage}\hfill
  \begin{minipage}{0.45\textwidth}
    \centering
    \includegraphics[width=0.8\textwidth]{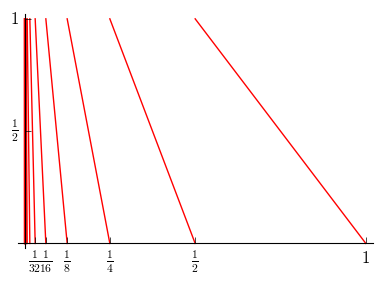}
    \caption{Graph of the jump transformation~$x\mapsto \BI(x)$.}
    \label{fig:graph-bin}
  \end{minipage}
\end{figure}
\end{subfigures}

Therefore, for any dyadic number~$m/2^N\in(0, 1]$, if we denote by~$K = K(m/2^N)\geq 0$ the least integer such that~$\BI^K(m/2^N) = 1$, then by iterating~\eqref{eq:link-phi-stern-m'}, we obtain
\begin{equation}
s(2^N+m) = \Bigg(\Phi\Big(\frac m{2^N}\Big) \Phi\Big(\BI\Big(\frac m{2^N}\Big)\Big) \dotsb \Phi\Big(\BI^{K-1}\Big(\frac m{2^N}\Big)\Big) \Bigg)^{-1}.\label{eq:stern-binary}
\end{equation}

The formula~\eqref{eq:stern-binary} has the advantage that the points~$m/2^N$ are easily described, however, the right-hand side involves the function~$\Phi$. At this point we may use the property that the function~$\Phi$ conjugates the binary map with another simple map: by Proposition~1.1 of~\cite{Panti}, we have the conjugacy relation
\begin{equation*}
\B = \Phi^{-1} \circ \F \circ \Phi \label{eq:conj-0}
\end{equation*}
between the binary map~$\B$, which we have already mentioned, and the \emph{Farey map} on the interval~$[0, 1]$, defined by
\begin{equation}\label{eq:def-Farey}
\F(x) = \min\Bigl(\frac x{1-x}, \frac{1-x}x\Bigr).
\end{equation}
By induction, we obtain the relation
\begin{equation}
\BI = \Phi^{-1} \circ \FI \circ \Phi \label{eq:conj}
\end{equation}
between the map~$\BI$, defined at~\eqref{eq:def-map-jump-bin}, and the ``jump transformation'' of the Farey map on the interval~$[\hlf,1]$, which is known (see \emph{e.g.}~\cite{PS, Isola}) to be precisely the \emph{Gauss map}
$$ \FI(x) = \G(x) := \begin{cases} 1/x - n & \text{for }x\in[\frac1{n+1}, \frac1n),\ n\geq 1, \\ 0 & \text{for } x\in\{0, 1\}. \end{cases} $$
\begin{subfigures}
\begin{figure}[h]
  \centering
  \begin{minipage}{0.45\textwidth}
    \centering
    \includegraphics[width=0.8\textwidth]{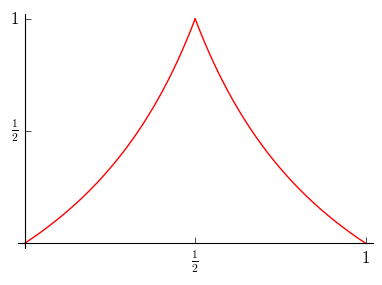}
    \caption{Graph of the Farey map~$x\mapsto \F(x)$.}
    \label{fig:graph-gauss-0}
  \end{minipage}\hfill
  \begin{minipage}{0.45\textwidth}
    \centering
    \includegraphics[width=0.8\textwidth]{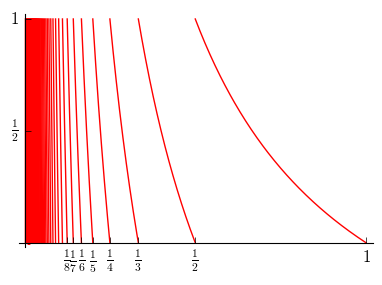}
    \caption{Graph of the Gauss map~$x\mapsto \G(x) = \FI(x)$.}
    \label{fig:graph-gauss}
  \end{minipage}
\end{figure}
\end{subfigures}

We mention at this point that the Lebesgue measure is invariant for the binary maps~$\B$ and~$\BI$. Upon conjugating, we deduce that the Minkowski measure~$\mu$ is invariant for the Farey and the Gauss maps; this fact will be used repeatedly in our arguments. Moreover, the measure~$\mu$ has maximal entropy~$\log 2$ for the Farey map; by contrast, the Farey map also admits~$\dd x/x$ as a unique invariant measure absolutely continuous with respect to the Lebesgue measure, but it is not finite, and has entropy zero. We refer to \cite{Lagarias} and to Chapter~1.2 of~\cite{KMS} for more details.

By~\eqref{eq:conj}, the relation~\eqref{eq:stern-binary} becomes the following.
\begin{proposition}\label{prop:stern-gauss}
Let $N\geq 0$, $1\leq m\leq 2^{N}$
and denote by~$K = K(m/2^N)\geq 0$ the least integer such that~$(\G^K \circ\ \Phi)(m/2^N) = 1$. Then
\begin{equation}
s(2^N+m) = \Bigg(\Phi\Big(\frac m{2^N}\Big) \G\Big(\Phi\Big(\frac m{2^N}\Big)\Big) \dotsb \G^{K-1}\Big(\Phi\Big(\frac m{2^N}\Big)\Big) \Bigg)^{-1}.\label{eq:stern-gauss}
\end{equation}
\end{proposition}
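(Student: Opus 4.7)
The proposition is a direct consequence of two facts already established in the excerpt: the iterated formula \eqref{eq:stern-binary} expressing $s(2^N+m)$ as a product of values of $\Phi$ along the $\BI$-orbit of $m/2^N$, and the topological conjugacy \eqref{eq:conj}, $\BI = \Phi^{-1}\circ\G\circ\Phi$. The plan is simply to propagate the conjugacy through each iterate in \eqref{eq:stern-binary}.

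First, I would observe that by straightforward induction on $j\geq 0$, the conjugacy relation \eqref{eq:conj} yields
$$ \BI^j = \Phi^{-1}\circ \G^j\circ \Phi, $$
using that $\Phi$ is a bijection of $[0,1]$ to itself. Applying $\Phi$ to both sides gives
$$ \Phi\bigl(\BI^j(x)\bigr) = \G^j\bigl(\Phi(x)\bigr) \qquad (x\in[0,1],\ j\geq 0). $$

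Next, I would check that the stopping index $K$ is the same in both descriptions. With $K$ the least integer with $\BI^K(m/2^N)=1$, applying $\Phi$ and using $\Phi(1)=1$ yields $\G^K(\Phi(m/2^N)) = 1$, and conversely, since $\Phi$ is injective and $\Phi^{-1}(1)=1$, the least such integer for the Gauss orbit coincides with the least one for the $\BI$-orbit. Thus the two definitions of $K$ agree.

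Finally, substituting $\Phi(\BI^j(m/2^N)) = \G^j(\Phi(m/2^N))$ into each factor on the right-hand side of \eqref{eq:stern-binary} immediately gives \eqref{eq:stern-gauss}. There is no genuine obstacle here; the content of the result lies entirely in the derivation of \eqref{eq:stern-binary} from the recurrence \eqref{eq:recur-stern-intro} and in Panti's conjugacy \eqref{eq:conj}, both of which are invoked. The only point worth being explicit about is the base case $m=2^N$, where $\Phi(1)=1$, $K=0$, and both sides of \eqref{eq:stern-gauss} equal $1 = s(2^{N+1})/s(2^{N+1})$ (empty product convention), consistently with $s(2^{N+1})=1$.
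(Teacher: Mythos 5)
Your proposal is correct and follows essentially the same route as the paper, which obtains the proposition by rewriting each factor of \eqref{eq:stern-binary} via the conjugacy \eqref{eq:conj} (so that $\Phi(\BI^j(m/2^N))=\G^j(\Phi(m/2^N))$). Your extra checks — that the stopping index $K$ agrees under $\Phi$ and the base case $m=2^N$ — are fine and merely make explicit what the paper leaves implicit.
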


Equivalently, we have
\begin{equation*}
\log s(2^N+m) = -\sum_{i=0}^{K(m/2^N)-1}\log\Big((\G^i\circ\ \Phi)\Big(\frac m{2^N}\Big)\Big) \label{eq:stern-sumlogphi}
\end{equation*}
Our situation at this point is formally similar to the work of Baladi and Vall\'ee~\cite{BV}, concerned with statistical properties of orbits of rationals under the Gauss map. Our ``cost function'' here is~$x\mapsto -\log x$. This analogy provides a heuristic explanation for Theorem~\ref{th:distrib} as follows.

It is easily seen that for~$r\geq 0$ and~$N\geq 0$, $\bigl|\{m\in\{1, \dotsc, 2^N\}:\ K(\frac m{2^N})=r\}\bigr| = \binom Nr$. It follows that, as~$N\to\infty$, we have
$$ K(m/2^N) \sim \frac N2 $$
for a proportion~$1+o(1)$ of integers~$m\in\{1, \dotsc, 2^N\}$ as~$N\to\infty$ (which we abbreviate by ``generic $m$'').

On the other hand, by Theorem 5.12 of~\cite{MR-preimages}, we have that for a generic~$m\in\{1, \dotsc, 2^N\}$, the pre-images~$(\B^i(\frac m{2^N}))_{i=0}^N$ by the binary map will equidistribute as~$N\to\infty$ according to the Lebesgue measure on~$[0,1]$.
We may then guess that the same is true for the pre-images~$(\BI^i(\frac m{2^N}))_{i=0}^{K(m/2^N)}$ by the \emph{jump transformation $\BI$}. After conjugation by~$\Phi$, the Lebesgue measure is sent to the Minkowski measure~$\mu = \dd(\Phi^{-1})$: in particular, we are led to expect that for generic~$m\in\{1, \dotsc, 2^N\}$, we have
$$ \frac1{K(m/2^N)} \sum_{i=0}^{K(m/2^N)-1}\log\Big((\G^i\circ\ \Phi)\Big(\frac m{2^N}\Big)\Big) \sim \int_0^1 \log x \dd\mu(x), $$
and therefore for generic~$m\in\{1, \dotsc, 2^N\}$,
$$ \log s(2^N + m) \sim -\frac N2 \int_0^1 \log x \dd\mu(x). $$
This is indeed a consequence of Theorem~\ref{th:distrib} by our definition~\eqref{eq:alpha-log}.

This guess, which might seem naive at this stage, echoes a similar phenomenon for real, \emph{resp.} rational trajectories under the Gauss map~\cite{Hensley, BV}: see in particular the parallel between Theorems 1 and 3 of~\cite{BV}, and the factorization ``$\mu(c) = \mu \times {\hat \mu}(c)$'' stated there on page 350. For us, ``$\mu$'' plays the role the typical ratio of the length of rational trajectories~$K(m/2^N)$ by~$N$; and ``${\hat \mu}(c)$'' is the generic average of the cost function over real trajectories.

\subsection{Outline of the proof}\label{sec:outline-proof}

Having at hand the expression~\eqref{prop:stern-gauss} for the Stern sequence in terms of iterates of the Gauss map, the first step towards the proof of Theorem~\ref{th:moments} is to construct a generating series for the moment on the left-hand side of~\eqref{eq:estim-moments}. The precise form has to be amenable to analytic tools. For~$(\tau, z)\in\C^2$ of small enough moduli, let
\begin{align*}
S_\tau(z) {}& = \sum_{N\geq 0} (2z)^N \Exp_N[(\S_N)^{-\tau}] \numberthis\label{eq:def-MN-F}\\
{}& = \sum_{N\geq 0} z^N \sum_{n\in\I_N} s(n)^{-\tau}.
\end{align*}
The framework of analytic combinatorics~\cite{FS} relates the properties of~$S_\tau(z)$ (analytic continuation, meromorphy, location of poles, spectral gap) with the asymptotic behavior of~$\Exp_N[(\S_N)^{-\tau}]$.

The next step is to obtain the analytic information required on~$S_\tau(z)$. To this end, we adapt methods of ``dynamical analysis'', introduced by Vall\'ee and described for instance in~\cite{Vallee, BV}. The main point, which is behind the choice of the generating function~$S_\tau(z)$, is the expression
\begin{equation}
S_\tau(z) = \frac1{1-z} \sum_{k\geq 0}\Big(\H_{\tau, z}^k[\1]\Big)(1).\label{eq:Stz-sumH}
\end{equation}
This involves the iterates of an operator~$\H_{\tau, z}$, which acts on bounded functions~$f:[0, 1]\to\C$ by
$$ \H_{\tau,z}[f] : t \mapsto \sum_{n\geq 0} \frac{z^n}{(n+t)^\tau} f\Big(\frac1{n+t}\Big). $$
The operator~$\H_{\tau,z}$ is a particular case of a weighted Ruelle-Perron-Frobenius transfer operator~\cite{Ruelle, Baladi}. Our situation is similar to the work of Baladi-Vall\'ee~\cite{BV} on the Gaussian behavior of Euclidean algorithms. The pole of smallest modulus of~$S_\tau(z)$ will occur at a point~$z = \rho(\tau)$, where~$\H_{\tau,z}$ has dominant eigenvalue~$1$, and the Cauchy formula transfers this information into the estimate~\eqref{eq:estim-moments} with~$U(\tau) = -\log(2\rho(-\tau))$.

The deduction of Theorem~\ref{th:distrib} from Theorem~\ref{th:moments}, which is standard in probability theory, will be made by appealing to Hwang's Quasi-Powers theorem~\cite{Hwang2}. We remark that the uniformity of Theorem~\ref{th:moments} is crucial in this deduction.

As in \cite{Morris}, we have chosen to include details of the arguments from spectral theory, rather than quote them as a black box, so that readers unacquainted with these topics, but who are still interested in the arithmetic application, may follow through.

\bigskip{}

The plan is the following: in Section~\ref{sec:moments-to-transferop}, we derive the expression~\eqref{eq:Stz-sumH}. In Section~\ref{sec:prop-transferop}, we study the operator~$\H_{\tau,z}$, with a particular emphasis on the reference pair~$(\tau,z)=(0,\frac12)$. In Section~\ref{sec:analysis-Stz}, we carry out the analysis of~$S_\tau(z)$. In Section~\ref{sec:proof-th}, we complete the proof of Theorems~\ref{th:distrib} and~\ref{th:moments}. Finally in Section~\ref{sec:alpha}, we return to the mean-value~$\alpha$ and the variance~$\sigma^2$, and prove Theorem~\ref{th:alpha}.

\section{Expressing the moment-generating function}\label{sec:moments-to-transferop}

In this section we express the moment-generating function on the left-hand side of~\eqref{eq:estim-moments} in terms of a weighted transfer operator for the Gauss map, continuing the arguments of Section~\ref{sec:heuristic-theorem}.

We recall Proposition~\ref{prop:stern-gauss}. Note that the Gauss map has the property that~$\G'(x) = -1/x^2$ for~$x>0$. It is therefore very convenient to use it in conjunction with~\eqref{eq:stern-gauss} to obtain the ``product of cocycles''
\begin{equation}\begin{aligned}
s(2^N+m)^2 {}& = \prod_{i=0}^{K-1}\left|\G'\right|\circ\G^i\Big(\Phi\Big(\frac m{2^N}\Big)\Big) \\
{}& = |(\G^K)'|\Big(\Phi\Big(\frac m{2^N}\Big)\Big).\label{eq:stern-gauss-derivative}
\end{aligned}\end{equation}

The inverse branches of the Gauss map~$\G$ form the set
$$ \cH = \{h_n,\ n\geq 1\}, \qquad h_n(x) = \frac1{n+x} \quad (x\in[0, 1]). $$
For any given~$K\in\N_{\geq 0}$, the set~$\cH^K$ of inverse branches of the function~$\G^K : [0, 1]\to[0, 1]$ are then given by
$$ \cH^K = \{ h_{n_1} \circ \dotsb \circ h_{n_K}:\ n_j\geq 1\}, $$
where it is understood that~$\cH^0 = \{\id\}$. The decomposition~$h=h_{n_1} \circ \dotsb \circ h_{n_K}$ is unique, therefore, it makes sense to define
$$ w(h) := n_1 + \dotsb + n_K. $$
An immediate verification shows that
$$ \{ \tfrac{m}{2^N} \ |\  0\leq m \leq 2^N, \ m\textnormal{ odd} \} = \B^{-N}(\{1\}) $$
and thus, letting~$\cH^* = \cup_{K\geq 0} \cH^K$,
\begin{equation}
\begin{aligned}
 \{ \Phi(\tfrac{m}{2^N}) \ |\  0\leq m \leq 2^N, \ m\textnormal{ odd} \} {}& = \F^{-N}(\{1\}) \\
{}& = \{ h(1) \ |\  h\in \cH^*, \ w(h) = N\}.  \label{eq:invF}
\end{aligned}
\end{equation}
Gathering the above, we obtain
\begin{align*}
S_\tau(z) {}& = \sum_{N\geq 0} z^N \sum_{m=1}^{2^N} s(m+2^N)^{-\tau} \\
{}& = \sum_{N\geq 0} z^N \sum_{0\leq r \leq N} \sum_{\substack{m=1 \\ m\text{ odd}}}^{2^r} s(m+2^r)^{-\tau} \\
{}& = \frac1{1-z}\sum_{r\geq 0} z^r \sum_{K\geq 0} \ssum{h\in\cH^K \\ w(h)=r} |(\G^K)' \circ h(1)|^{-\tau/2} \\
{}& = \frac1{1-z}\sum_{h\in\cH^*} z^{w(h)} |h'(1)|^{\tau/2}, \numberthis\label{eq:Stz-Hstar}
\end{align*}
where we have extracted the largest power of~$2$ dividing~$m$ in the first line; the third line used~\eqref{eq:stern-gauss-derivative} and~\eqref{eq:invF}, and the last line followed by the derivative formula for the inverse.

The sum over~$h\in\cH^*$ is now recognized as a sum of iterates of a ``density transformer'', or transfer operator, also called Ruelle-Perron-Frobenius operator. These objects, and their spectral properties, have a long history and have been extensively studied, notably in connection with continued fraction, and thermodynamic formalism. We refer to the surveys~\cite{Ruelle}, the lecture notes~\cite{Mayer} and the monographs~\cite{Baladi, Ruelle-thermo}. More explanations and references relevant to our case are found in Section 2.2 of~\cite{BV}. For all~$(\tau, z)\in\C^2$ with~$|z|<1$, let the operator~$\H_{\tau,z}$ act on continuous functions~$f\in\cC([0, 1])$ by
\begin{equation}\label{eq:def-Htz}
\begin{aligned}
\H_{\tau, z}[f](t) {}& = \sum_{h\in\cH} z^n |h'(t)|^{\tau/2} (f\circ h)(t) \\
{}& = \sum_{n\geq 1} \frac{z^n}{(n+t)^{\tau}} f\Big(\frac{1}{n+t}\Big).
\end{aligned}
\end{equation}
Then from formula~(2.7) of~\cite{BV}, we have that for all~$K\geq 0$
\begin{equation}
\H^K_{\tau,z}[f](t) = \sum_{h\in\cH^K} z^{w(h)} |h'(t)|^{\tau/2} (f\circ h)(t).\label{eq:iter-H}
\end{equation}
Let now
\begin{equation}
\cW = \{(\tau, z)\in\C^2:\ |z| < \min(1, 2^{-\Re \tau})\}.\label{eq:def-small-tau-z}
\end{equation}
By equations~\eqref{eq:Stz-Hstar}, \eqref{eq:def-Htz} and \eqref{eq:iter-H} above, we conclude the following.
\begin{proposition}\label{prop:Fsz-H-iter}
For~$(\tau, z)\in \cW$, we have
$$ S_\tau(z) = \frac1{1-z} \sum_{k \geq 0} \H_{\tau, z}^k[\1](1) = \frac 1{1-z}(\Id - \H_{\tau, z})^{-1}[\1](1). $$
\end{proposition}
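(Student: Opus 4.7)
My plan is to combine, essentially by bookkeeping, the expression \eqref{eq:Stz-Hstar} for $S_\tau(z)$ with the iterate formula \eqref{eq:iter-H} for the transfer operator. The first equality in the statement amounts to reorganising the sum over $h\in\cH^*$ according to the word length $K$. Setting $f=\1$ and $t=1$ in \eqref{eq:iter-H} collapses the factor $(f\circ h)(1)$ to $1$, so that for each $K\geq 0$,
$$\H_{\tau,z}^K[\1](1)=\sum_{h\in\cH^K} z^{w(h)}|h'(1)|^{\tau/2}.$$
Substituting the partition $\cH^*=\bigsqcup_{K\geq 0}\cH^K$ into \eqref{eq:Stz-Hstar} and summing on $K$ then yields the first equality, once absolute convergence is in hand to justify the exchange of summations.

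For the second equality I would invoke the Neumann identity $\sum_{K\geq 0}T^K=(\Id-T)^{-1}$ with $T=\H_{\tau,z}$ acting on the constant function $\1$, in a Banach space such as $\cC([0,1])$ equipped with the supremum norm. Convergence of the series of iterates reduces, via the definition \eqref{eq:def-Htz}, to the uniform bound
$$\|\H_{\tau,z}[f]\|_\infty \leq \|f\|_\infty\sum_{n\geq 1}|z|^n \sup_{t\in[0,1]}(n+t)^{-\Re\tau}\leq C(\tau)\|f\|_\infty\sum_{n\geq 1}|z|^n n^{-\Re\tau},$$
after the crude estimate $(n+t)^{-\Re\tau}\leq 2^{\max(-\Re\tau,0)}\, n^{-\Re\tau}$. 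The right-hand side is finite throughout $\cW$ and strictly less than $1$ in a non-trivial subregion containing small $|z|$; in that subregion the Neumann series converges absolutely and gives the second equality.

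The main subtlety I anticipate is that this cheap operator-norm bound is not $<1$ on the whole of $\cW$: indeed at the reference pair $(\tau,z)=(0,\hlf)\in\cW$ a direct computation gives $\H_{0,\hlf}[\1]=\1$, so that the Neumann series fails to converge and $\Id-\H_{\tau,z}$ is not boundedly invertible there. The natural remedy is to first establish the identity on the convergence subregion by the combinatorial argument above, and then to extend both sides by analytic continuation to the rest of $\cW$ using the spectral analysis of $\H_{\tau,z}$ carried out in Section~\ref{sec:prop-transferop}. The combinatorial heart of the proposition is contained in the one-line evaluation of \eqref{eq:iter-H} at $(f,t)=(\1,1)$ together with the partition of $\cH^*$ by word length; the real work lies entirely in the spectral justification at the boundary of convergence.
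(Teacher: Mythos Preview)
Your combinatorial argument---partitioning $\cH^*=\bigsqcup_{K\geq 0}\cH^K$ and evaluating \eqref{eq:iter-H} at $(f,t)=(\1,1)$---is exactly the paper's proof, which amounts to one line citing \eqref{eq:Stz-Hstar} and \eqref{eq:iter-H}. The second equality is just the Neumann identity, valid wherever the series of iterates converges.

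Your convergence concern is well-taken but over-engineered. You are right that $(0,\hf)\in\cW$ and that there $\H[\1]=\1$, so $\sum_k\H^k[\1](1)$ diverges; but note that $S_0(\hf)=\sum_{N\geq 0}1$ diverges as well. So the identity simply cannot hold at that point, and your proposed remedy of analytic continuation via Section~\ref{sec:prop-transferop} cannot cure this: $(0,\hf)$ is a genuine pole of both sides, not a removable singularity. This is a minor imprecision in the paper's choice of domain $\cW$ (which is where the operator $\H_{\tau,z}$ is well-defined, not where $S_\tau(z)$ converges), not a gap in the argument. The proposition should be read as holding on the subregion of $\cW$ where the series converge; the meromorphic continuation past that region is carried out separately in Propositions~\ref{prop:quasi-inverse} and~\ref{prop:merom-F}, and is not part of the present proof.
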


\section{Properties of the transfer operator}\label{sec:prop-transferop}

As we have mentioned, spectral properties of transfer operators have been extensively studied; we refer again to the lecture notes~\cite{Mayer}, sections 7.1 and 7.4, and the references therein. The actual operator~$\H_{\tau,z}$ has been defined and studied at many occurrences in the literature: see~\cite[formula~(10)]{PS}, \cite{Dodds}, \cite[formula~(3.39)]{Isola}. Indeed, many of the forthcoming properties of~$\H_{\tau,z}$ for real~$z$ can be found in~\cite{PS}. Nonetheless, to make the arguments as clear as possible for readers unacquainted with these topics, we will provide full proofs, following the presentation of~\cite{Morris}, apart from perturbation theory of operators, which we will quote from~\cite{Kato}.

\subsection{Definitions}

A critical first step is to define an appropriate functional space in which to study~$\H_{\tau,z}$. For our arithmetic application, two constraints must be satisfied: the interval~$[0, 1]$ should be contained in their domain, and it must include the constant function~$\1$ and all its iterates under~$\H_{\tau,z}$ for all~$(\tau, z)$ in a neighborhood of the origin.

Consider the domain~$\D = \{t\in\C:\ |t-\frac23|<1\}$, and the set of functions
$$ H^\infty(\D) = \big\{f : \D \to \C:\ f\text{ is holomorphic and bounded}\}. $$
For~$(\tau, z) \in \cW$ and~$f\in H^\infty(\D)$, we define~$\H_{\tau,z}$ as in~\eqref{eq:def-Htz}, taking the principal determination of the logarithm.

For~${\mathbb T}$ an operator, we denote by~$\srd({\mathbb T})$ the spectral radius of~$\mathbb{T}$. We further recall the definition of the Gauss map and its inverse branches,
$$ \G(x) := \begin{cases} 1/x - n, & x\in[\frac1{n+1}, \frac1n), \\ 0, & x\in\{0, 1\}, \end{cases} \qquad\qquad h_n(x) := \frac1{n+x} . $$

\subsection{Decomposition of the transfer operator}

In this section, we obtain the basic properties of the operator $\H_{\tau,z}$. The arguments involved have a long history~\cite{Mayer}; we will mostly follow the presentation found in~\cite{Morris}, which is well adapted to our setting. We are interested in the behavior of $\Exp_N\bigl[(\S_N)^\tau\bigr]$ when $\tau$ is in a neighborhood of $\tau=0$. At this value the power series $S_\tau$ has radius of convergence $\frac12$, and thus of particular importance for us is the value~$(\tau,z)=(0,\frac12)$, around which~$\H_{\tau, z}$ will have~$1$ as an eigenvalue. We denote
$$ \H := \H_{0,\frac12}. $$
We will first focus on~$\H$, then on the case~$\tau=0$, $|z|=\frac12$, and finally we study arbitrary~$(\tau, z)$ using the theory of analytic perturbation (as in~\cite[Chapter IV]{Kato}). 

\begin{proposition}\label{prop:decomp-H}
\begin{enumerate}[label=(\roman*)]
\item For~$|z|<1$ and~$\tau\in\C$, the operator~$\H_{\tau,z}$ is compact and depends holomorphically on~$(\tau,z)$ in the sense of~\cite[p.366]{Kato}.
\item The eigenmeasure of the adjoint operator $\H^*$ is $\mu$, that is, for all~$f\in L^1(\mu)$ we have~$\H[f]\in L^1(\mu)$ and
\begin{equation}
\int \H[f] \dd\mu = \int f\dd\mu.\label{eq:eigenmeasure}
\end{equation} 
Moreover, the operator $\H$ has a simple isolated eigenvalue equal to $1$; the corresponding eigenspace is generated by the constant function $\1$. Finally, this eigenvalue is the only element  of modulus $1$ in the spectrum of $\H$.
\end{enumerate}
Furthermore,  for all small enough~$\delta_1>0$, we may find~$\delta_2>0$ and~$\theta\in[0,1)$ such that, writing
\begin{align*}
\cV_1 {}& := \{(\tau, z)\in\C^2:\ |\tau|\leq \delta_1, \ |z-\hf|\leq \delta_1\}, \\
\cV_2 {}& := \{(\tau, z)\in\C^2:\ |\tau|\leq \delta_2, \ |z|\leq \hf+\delta_2,\ |z-\hf|\geq \delta_1\},
\end{align*}
the following holds.
\begin{enumerate}[label=(\roman*)]
  \setcounter{enumi}{2}
\item For~$(\tau,z)\in\cV_1$, $\H_{\tau,z} $ has a simple and isolated dominant eigenvalue at $\lambda(\tau,z)\in\C$ with eigenfunction  $f_{\tau,z} \in H^\infty(\D)\smallsetminus\{0\}$; also,~$\lambda(0,\hlf)=1$ and $f_{0,\hlf}=\1$. In particular, we have the decomposition
\begin{equation}
\H_{\tau,z} = \lambda(\tau,z)\P_{\tau,z} + \N_{\tau,z},\label{eq:decomp-Hsz}
\end{equation}
with~$\P_{\tau,z}$ and~$\N_{\tau,z}$ two compact operators on~$H^\infty(\D)$. Moreover, the image of~$\P_{\tau,z}$ is one-dimensional: $\im\P_{\tau,z} = f_{\tau,z}\C$ and~${f_{\tau,z} = \P_{\tau,z}[\1]} $. Finally, we have~${\srd}(\N_{\tau,z})\leq \theta$ and~$\P_{\tau,z}\N_{\tau,z}=\N_{\tau,z}\P_{\tau,z}=\0$.

\item For~$(\tau,z)\in\cV_1$,~$f_{\tau,z}$ and~$\lambda(\tau,z)$ depend holomorphically on both variables.
\item For~$(\tau,z)\in\cV_2$, we have~$\srd({\H_{\tau,z}})\leq\theta$.
\end{enumerate}
\end{proposition}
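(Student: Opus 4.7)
The plan follows the five items of the statement; the main obstacle is the uniform spectral-gap bound on $\cV_2$ (item (v)).

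For (i), the key geometric fact is that each inverse branch $h_n(t) = 1/(n+t)$ maps $\D = \{|t-2/3|<1\}$ into a fixed compactly contained subdomain uniformly in $n \geq 1$: a direct check shows that $h_n(\D)$ lies in a disc of diameter $\ll 1/n$ centered near $1/n$, so $\bigcup_n h_n(\D) \Subset \D$. Each summand $f \mapsto z^n (n+t)^{-\tau} (f \circ h_n)$ is then compact on $H^\infty(\D)$ (Montel: composition with a map into a compactly contained subdomain sends bounded sets to normal families). Since $\sup_{t\in\D} |(n+t)^{-\tau}| \ll_\tau n^{|\Re\tau|}$ is absorbed by $|z|^n$ whenever $|z|<1$, the series converges in operator norm uniformly on compact subsets of $\C \times \{|z|<1\}$; this yields both the compactness of $\H_{\tau,z}$ and the joint holomorphy.

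For (ii), the identity $\H[\1] = \sum_{n \geq 1} 2^{-n} \cdot \1 = \1$ is immediate. The self-similarity $\Psi \circ h_n(t) = 2^{-n}(1 - \Psi(t))$ (obtained from \eqref{eq:def-minkowski} by prepending $n$ to the continued fraction of $t$) gives the scaling $\mu(h_n(E)) = 2^{-n}\mu(E)$, and after the change of variable $x = h_n(t)$, $\int \H[f]\,d\mu = \sum_{n \geq 1} 2^{-n} \int_0^1 (f \circ h_n)\,d\mu = \sum_{n \geq 1} \int_{1/(n+1)}^{1/n} f\,d\mu = \int f\,d\mu$. Simplicity of the eigenvalue $1$ and the spectral gap follow from the standard holomorphic Ruelle--Perron--Frobenius theory for nuclear transfer operators: as all $h_n$ are uniform strict contractions into a compactly contained subdomain with positive weights $2^{-n}$, either a Birkhoff-type contraction of the positive cone (as in \cite{Morris}) or the general nuclear operator theory (cf.~\cite{Mayer}) produces a decomposition $\H = \P + \N$ with $\P$ the rank-one projection onto $\C\cdot\1$ and $\srd(\N) \leq \theta_0 < 1$. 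Items (iii) and (iv) then follow at once from Kato's analytic perturbation theory \cite[Ch.~IV]{Kato}: the Riesz projection $\P_{\tau,z} = \frac{1}{2\pi i}\oint_\gamma (\zeta - \H_{\tau,z})^{-1}d\zeta$ around a small circle $\gamma$ isolating $1$ remains rank-one and jointly holomorphic on a small bidisc $\cV_1$, $\lambda(\tau,z)$ is extracted via $\H_{\tau,z}\P_{\tau,z} = \lambda(\tau,z)\P_{\tau,z}$, and setting $\N_{\tau,z} = \H_{\tau,z}(\Id - \P_{\tau,z})$ gives $\srd(\N_{\tau,z}) \leq \theta$ by upper semicontinuity of the spectrum of compact operators.

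The hard part is (v). The plan is to first treat $\tau = 0$ and then extend by perturbation. For $|z| < 1/2$, iterating $\|\H_{0,z}[\1]\|_\infty \leq |z|/(1-|z|) < 1$ gives $\srd(\H_{0,z}) < 1$. For $|z| = 1/2$ with $z \neq 1/2$, the pointwise domination $|\H_{0,z}^k[f]| \leq \H^k[|f|]$ only yields $\srd(\H_{0,z}) \leq 1$, so a strict exclusion of unit-modulus eigenvalues is required. Assume for contradiction $\H_{0,z} f = \lambda f$ with $f \in H^\infty(\D) \setminus \{0\}$ and $|\lambda| = 1$. Then $|f| \leq \H[|f|]$ pointwise and integrating against $\mu$ forces $|f| = \H[|f|]$ throughout $\mathrm{supp}(\mu) = [0,1]$ by continuity. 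Saturation of the triangle inequality then forces, for each such $t$, all nonzero terms $z^n f(h_n(t))$ to share a common argument. Specializing $t = 0 \in \D$: by analyticity of $f$ at $0$ one has $f(1/n) \sim c\,n^{-m}$ for some $c \neq 0$ and some finite order $m \geq 0$, so $\arg f(1/n)$ converges, which forces $\arg(z^n) = n\arg z$ to be eventually constant modulo $2\pi$; hence $\arg z = 0$ and $z = 1/2$, a contradiction. Thus $\srd(\H_{0,z}) < 1$ on the compact set $\{|z| \leq 1/2,\ |z-1/2| \geq \delta_1\}$, and upper semicontinuity of the spectrum for the jointly holomorphic compact family $\H_{\tau,z}$ extends this bound to a uniform $\theta < 1$ over all of $\cV_2$ provided $\delta_2$ is taken small enough.
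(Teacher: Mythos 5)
Your proposal is correct and follows essentially the paper's architecture: Montel-type compactness for (i), the self-similarity of $\Psi$ under the inverse branches to get \eqref{eq:eigenmeasure}, Kato's analytic perturbation theory for (iii)--(iv), and for (v) an exclusion of unimodular eigenvalues at $|z|=\hf$, $z\neq\hf$, followed by upper semicontinuity and a compactness argument. The genuine differences are modest but worth recording. In (i) you send $\D$ into a compactly contained subdomain, whereas the paper enlarges the domain to $\{|t-\frac23|<\frac{17}{16}\}$ and uses $h_n$ mapping it into $\D$; this is the same Montel argument in two guises. In (v), your route to saturation is a nice variant: instead of normalizing an eigenfunction at a maximum point of $|f|$ on $[0,1]$, you integrate $|f|\leq\H[|f|]$ against $\mu$ and use that $\mu$ has full support, then conclude at $t=0$ exactly as the paper does (convergence of $\arg f(1/n)$ forces $\arg z=0$); you also treat $|z|<\hf$ only at $\tau=0$ and recover uniformity in $\tau$ by covering the compact slice $\{0\}\times\{|z|\leq\hf,\ |z-\hf|\geq\delta_1\}$, where the paper instead gives the direct bound $\sum_{n\geq1}|z|^n(n+1)^{\delta_2}<1$ uniformly in $|\tau|\leq\delta_2$ --- both yield the required uniform $\theta$. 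The one place where you substitute a citation for an argument is the spectral content of (ii): simplicity of the eigenvalue $1$, the eigenspace $\C\1$, and the absence of other unimodular spectrum are delegated to Birkhoff-cone/nuclear RPF theory without verifying its hypotheses (cone invariance with finite projective diameter, or the positivity input that nuclearity alone does not provide). This is not a fatal gap --- the quoted theory does apply to this operator --- but it is against the self-contained spirit of the statement, and your own saturation device already contains the short proof the paper gives: at $z=\hf$, saturation at a point $t_0$ where $|f|$ attains its maximum on $[0,1]$ forces the values $f(h_n(t_0))$ to be one common constant, hence $f\in\C\1$ by the identity theorem at the accumulation point $0$; algebraic simplicity then follows because $(\H-\Id)^2[f]=\0$ gives $(\H-\Id)[f]=c\1$, and integrating against $\mu$ via \eqref{eq:eigenmeasure} forces $c=0$; isolation is automatic from compactness. (A cosmetic point: your functional equation $\Psi\circ h_n=2^{-n}(1-\Psi)$ versus the paper's $2^{-n}(2-\Psi)$ is only the normalization of \eqref{eq:def-minkowski}; the scaling $\mu(h_n(E))=2^{-n}\mu(E)$ that you actually use is the same.)
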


\begin{proof}
{(i) Let $ \tilde\D:= \{t\in\C:\ |t-\frac23|<\frac{17}{16}\}$. We have that $h_n$ maps $\tilde\D$ into %a compact subset of 
$\D$ for all $n\geq1$. In particular, all the elements of the sets $S:=\{f\circ h_n\mid f\in H^\infty(\D),\, \|f\|_\infty\leq 1\}$ can be extended to functions in the unit disk of  $H^\infty(\tilde \D)$. By Montel's theorem the set obtained by these extended functions is pre-compact with respect to the compact-open topology on $H^\infty(\tilde\D)$ and thus $S$ is pre-compact with respect to the uniform topology of $H^\infty( \D)$. In particular, the operator $f\mapsto f\circ h_n$ on $ H^\infty(\D)$ is compact and thus $f\mapsto \frac{z^n}{(n+\cdot )^\tau}f\big(\frac1{n+\cdot }\big) $ is also compact (and holomorphic in $(\tau,z)$). The same then holds for $\H_{\tau,z}$ since its defining series converges locally uniformly in $(\tau,z)$.}

(ii) One has that $\H[f](x)=\U[f\circ \Phi](\Phi^{-1}(x))$, where $\U$ is the Ruelle operator relative to the binary map~$\BI$ defined at~\eqref{eq:def-map-jump-bin}; since the Lebesgue measure is an eigenmeasure for $\U^*$~\cite[Prop. 2.3.21]{KMS} with eigenvalue~$1$, it follows that $\mu$ is an eigenmeasure for $\H^*$ with the same eigenvalue. One could also verify this directly from the definition~\eqref{eq:def-minkowski}. Indeed, since~$\Psi(h_n(x)) = 2^{-n}(2-\Psi(x))$ for all~$n\in\N^*$ then one has~$\mu(A) = 2^{-n} \mu(h_n^{-1}(A))$ for any interval~$A = [\tfrac1{n+1}, \tfrac1{n+x}]$, with $x\in[0,1]$. By the monotone class lemma one also has~$\mu(A\cap[\tfrac1{n+1}, \tfrac1n])=2^{-n}\mu(h_n^{-1}(A))$ for all~$\mu$-measurable set~$A$, and finally~$2^{-n} \int_{[0,1]} (f\circ h_n)\dd\mu = \int_{[1/(n+1),1/n]} f\dd\mu$ for all~$f\in L^1(\mu)$. Summing over~$n\geq 1$ yields~\eqref{eq:eigenmeasure}.

Since $\H$ is a compact operator, the non-zero elements of its spectrum are isolated eigenvalues of finite multiplicity; also, notice that $\H[\1]=\1$. Let $f\in H^\infty(\D)$ be an eigenfunction with eigenvalue $\lambda$ normalized so that $\max_{t\in[0,1]}|f(t)| = |f(t_0)| = 1$ for some~$t_0\in[0,1]$. Then, by the definition of $\H[f]$ and the triangle inequality we have
$$ |\lambda|  = |\H[f](t_0)|= \Big|\sum_{n\geq 1} 2^{-n} f\Big(\frac1{n+t_0}\Big)\Big| \leq \sum_{n\geq 1} 2^{-n} = 1. $$
Thus, $|\lambda|\leq 1$. Also, if $|\lambda|=1$ then the equality holds everywhere and so there exists $c\in\C$ of modulus $1$ such that $f(\frac1{n+t_0})=c$ for all $n\geq1$. Since $f$ is holomorphic and $\frac1{n+t_0}$ has $0\in\D$ as an accumulation point, we must have $f=c\1$. Thus, $\lambda=1$ is the only eigenvalue of modulus $1$ and $\ker (\H-\Id)$ is $1$-dimensional. Moreover, if $(\H-\Id)^2[f]=\0$ then $(\H-\Id)[f]$ is an eigenfunction of $\H$ with eigenvalue $1$ and so $(\H-\Id)[f]=c\1$ for some $c\in\C$. Integrating this equation with respect to $\dd\mu$ by~\eqref{eq:eigenmeasure} we find $c=0$. Thus, $f$ is itself a multiple of $\1$ and so $1$ is a simple isolated eigenvalue  of $\H$. 

(iii)
Now, let $C$ be a small circle centered at $1$ which doesn't enclose any other eigenvalue of $\H$ and assume $(\tau,z)\in\cV_1$. If $\delta_1$ is small enough, then by~\cite[Thm~IV.3.16]{Kato} we have that $C$ doesn't intersect the spectrum of ${\srd}(\H_{\tau,z})$. It follows that we can write $\H_{\tau,z}$ as a sum of compact operators $\H_{\tau,z}=\P_{\tau,z}+\N_{\tau,z}$ with $\P_{\tau,z}\N_{\tau,z}=\N_{\tau,z}\P_{\tau,z}=0$ and $\P_{\tau,z}^2=\P_{\tau,z}$, where $\P_{\tau,z}$ is the Riesz projection associated to $C$~\cite[Thm III.6.17]{Kato}. Moreover, since $1$ is a simple eigenvalue, the image of $\P_{0,\hlf}$ is one dimensional~\cite[pp.180-181]{Kato} and thus the same holds for  $\P_{\tau,z}$ if $\delta_1$ is small enough~\cite[Thm~IV.3.16]{Kato}.
Also, the spectrum of {$\H_{\tau,z}$ restricted to the image of~$\P_{\tau,z}$} consists of a unique eigenvalue $\lambda(\tau,z)$,  with $\lambda(0,\hlf)=1$, corresponding to the eigenfunction $f_{\tau, z}:=\P_{\tau,z}\1$, whereas the spectrum of $\N_{\tau,z}$ consists of that of $\H_{\tau,z}$ with $\lambda(\tau,z)$ removed~\cite[Thm~IV.3.16]{Kato}. In particular since $\srd(\N_{0,\hlf})\leq \theta'<1$ for some $\theta'\in[0,1)$, then by the upper-semicontinuity of the spectral radius~\cite[Thm~IV.3.16]{Kato} there exists $\theta\in[0,1)$ such that ${\srd}(\N_{\tau,z})\leq \theta$ for all $(\tau,z)\in\cV_1$ with $\delta_1$ small enough. 

(iv) By~\cite[Thm VII.1.7]{Kato} $\P_{\tau,z}$ and $\N_{\tau,z}$ depend homomorphically on $(\tau,z)$ and thus so does $f_{\tau, z}=\P_{\tau,z}[\1]$. Moreover, since $\P_{0,\hlf}[\1](0)=1$ we have $f_{\tau, z}(0)\neq0$ for  $(\tau,z)\in\cV_1$
and $\delta_1$ small enough. Thus the holomorphicity of $\lambda(\tau,z)$ follows since $\P_{\tau,z}[f_{\tau, z}](0)=\lambda(\tau,z)f_{\tau, z}(0)$.

(v)  First, consider the case~$\tau=0$ and~$z=\frac12\e^{2\pi i\phi}$ with~$\phi\in[0,1)$.  By the triangle inequality we have~$\|\H_{0,z}[f]\|_\infty\leq \|f\|_\infty$ for all~$f\in H^\infty(\D)$; in particular~${\srd}(\H_{0,z})\leq1$ for~$|z|=\hf$. Suppose now~$\phi\neq 0$. Then since~$\H_{0,z}$ is compact, it has an eigenfunction $f\in H^\infty(\D)$ with eigenvalue $\lambda$ of maximum modulus~${\srd}(\H_{0,z})$. Up to re-scaling $f$, we can assume $\max_{t\in[0,1]}|f(t)| = |f(t_0)| = 1$ for some~$t_0\in[0,1]$, whence
$$| \lambda | = |\H_{0,z}[f](t_0)|= \Big|\sum_{n\geq 1} 2^{-n}e^{2\pi i n\phi} f\Big(\frac1{n+t_0}\Big)\Big| \leq \sum_{n\geq 1} 2^{-n} = 1. $$
If we had~$|\lambda|=1$, then this would mean equality holds everywhere. This implies that all the summands in the first series have the same argument and the $n$-summand has modulus $2^{-n}$, that is
$$ e^{2\pi i n\phi} f\Big(\frac1{n+t_0}\Big) = \e^{2\pi i \alpha}  $$
for some $\alpha\in[0,1)$. Letting~$n\to\infty$, we deduce~$\e^{2\pi i n \phi} \to f(0) $, which implies~$\phi=0$. We conclude that~${\srd}(\H_{0,z})<1$ whenever~$|z|=\hf$ and~$z\neq \hf$.
Then, by the upper semi-continuity of $\srd(\H_{\tau,z})$~\cite[Thm~IV.3.16]{Kato}, we have that there exists $0<\delta_2<\frac12$ such that for $\tau,z$ satisfying 
$$\hf-\delta_2\leq |z|\leq \hf+\delta_2,\quad |z-\hf|\geq \delta_1,\quad |\tau|\leq \delta_2$$
 we also have $\srd(\H_{\tau,z})\leq\theta<1$. Finally, assume $|z|\leq \hlf -\delta_2$ and $|\tau|\leq \delta_2<-\hlf\log_2(1-2\delta_2)$. Then, by the triangle inequality for any~$f\in H^\infty(\D)$, we have
$$ \| \H_{\tau,z}[f] \|_\infty \leq \|f\|_\infty \sum_{n\geq 1} |z|^n(n+1)^{\delta_2}. $$
Since~$n+1\leq 2^n$, computing the series proves that~${\srd}(\H_{\tau,z})<1$. In particular, $\srd(\H_{\tau,z})<1$ for all $(\tau,z)\in\cV_2$ and (iv) follows.
\end{proof}

\subsection{\texorpdfstring{Specific properties at~$(\tau,z)=(0,\hf)$}{Specific properties at tau=0, z=half}}

We have the following properties linking~$\H$ and~$\mu$. We recall that the value~$\lambda(0,\hf)=1$ was proved at Proposition~\ref{prop:decomp-H}.(ii).

\begin{proposition}\label{prop:measure}
\begin{enumerate}[label=(\roman*)]
\item We have~$\P_{0,\hlf}[f] = (\int f\dd\mu)\1$ for all~$f\in H^\infty(\D)$.
\item The derivatives of~$\lambda$ satisfy
$$ \tfrac{\partial}{\partial \tau}\lambda(0, \hf) = \int_{[0, 1]}\log\dd\mu, \qquad \tfrac{\partial}{\partial z}\lambda(0, \hf)=4. $$
\end{enumerate}
\end{proposition}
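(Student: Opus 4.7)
For (i), the plan is to identify the linear functional $c: H^\infty(\D) \to \C$ such that $\P_{0,\hlf}[f] = c(f)\mathbf{1}$ (which exists because $\im\P_{0,\hlf}$ is spanned by $\mathbf{1}$ per Proposition~\ref{prop:decomp-H}) with $f \mapsto \int f\,d\mu$. From the decomposition $\H = \P_{0,\hlf} + \N_{0,\hlf}$ together with $\P_{0,\hlf}\N_{0,\hlf} = \N_{0,\hlf}\P_{0,\hlf} = \0$ and $\P_{0,\hlf}^2 = \P_{0,\hlf}$, iterating yields $\H^k = \P_{0,\hlf} + \N_{0,\hlf}^k$. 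Since $\srd(\N_{0,\hlf}) \leq \theta < 1$, the spectral radius formula gives $\|\N_{0,\hlf}^k\| \to 0$, so $\H^k[f] \to c(f)\mathbf{1}$ uniformly on $\D$. Integrating against $\mu$ and invoking the eigenmeasure identity~\eqref{eq:eigenmeasure} $k$ times, I get $\int f\,d\mu = \int \H^k[f]\,d\mu \to c(f)\int \mathbf{1}\,d\mu = c(f)$, since $\mu$ is a probability measure on $[0,1]$.

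For (ii), the natural strategy is to differentiate the eigenvalue equation $\H_{\tau,z}[f_{\tau,z}] = \lambda(\tau,z) f_{\tau,z}$ at the reference point $(0,\hf)$, where $f_{0,\hlf} = \mathbf{1}$. Applying $\partial_\tau$ and evaluating at $(0,\hlf)$ gives
\begin{equation*}
(\partial_\tau \H_{\tau,z})|_{(0,\hlf)}[\mathbf{1}] + \H[\partial_\tau f_{\tau,z}|_{(0,\hlf)}] = (\partial_\tau\lambda)(0,\hlf)\mathbf{1} + \partial_\tau f_{\tau,z}|_{(0,\hlf)},
\end{equation*}
and the analogous identity holds for $\partial_z$. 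Integrating against $\mu$ and using the eigenmeasure property~\eqref{eq:eigenmeasure} of $\H$ — which makes the two terms involving $\partial_\tau f_{\tau,z}$ (resp.\ $\partial_z f_{\tau,z}$) cancel — collapses the identity to
\begin{equation*}
(\partial_\tau\lambda)(0,\hlf) = \int_{[0,1]} (\partial_\tau \H_{\tau,z})|_{(0,\hlf)}[\mathbf{1}]\,d\mu, \qquad (\partial_z\lambda)(0,\hlf) = \int_{[0,1]} (\partial_z \H_{\tau,z})|_{(0,\hlf)}[\mathbf{1}]\,d\mu.
\end{equation*}

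It then remains to compute the two integrands termwise from the series definition~\eqref{eq:def-Htz}. For the $\tau$-derivative, $(\partial_\tau \H_{\tau,z})|_{(0,\hlf)}[\mathbf{1}](t) = -\sum_{n\geq 1} 2^{-n}\log(n+t) = \sum_{n\geq 1} 2^{-n}\log h_n(t) = \H[\log](t)$, so a further application of~\eqref{eq:eigenmeasure} yields $(\partial_\tau\lambda)(0,\hlf) = \int_{[0,1]} \log\,d\mu$. For the $z$-derivative, $(\partial_z \H_{\tau,z})|_{(0,\hlf)}[\mathbf{1}](t) = \sum_{n\geq 1} n\, 2^{-(n-1)} = 2\sum_{n\geq 1} n 2^{-n} = 4$, a constant, whose $\mu$-integral is simply $4$. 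The only delicate step is the differentiation under the sum defining $\H_{\tau,z}$, but this is immediate since the series converges locally uniformly in $(\tau,z) \in \cW$, as already invoked in Proposition~\ref{prop:decomp-H}(i); no serious obstacle is anticipated.
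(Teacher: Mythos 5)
Your proposal is correct and follows essentially the same route as the paper: for (i), iterating the spectral decomposition to get $\H^k[f]\to\P_{0,\hlf}[f]$ and integrating against $\mu$ via~\eqref{eq:eigenmeasure}; for (ii), differentiating the eigenvalue equation termwise at $(0,\hf)$ and killing the $\partial f_{\tau,z}$ terms by integrating against $\mu$ (the paper phrases this as applying $\P_{0,\hlf}$, which by (i) is the same thing), then computing $\H[\log]$ and $\sum_{n\ge1} n2^{1-n}=4$ exactly as you do.
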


\begin{proof}
(i) First we recall that $f_{0,\hlf}=\1$ and thus $\im \P_{0,\hlf}=\C\,\1$.
Given~$f\in H^\infty(\D)$, we have~$f\in L^1(\mu)$ and so, by~\eqref{eq:eigenmeasure},~$\int f\dd\mu = \int \H^k[f]\dd\mu$ for all~$k\in\N$. Letting~$k\to\infty$, from the decomposition~\eqref{eq:decomp-Hsz} we have~$\H^k[f] \to \P_{0,\hlf}[f]$ uniformly on~$[0,1]$, and so~$\int \H^k[f]\dd\mu \to \int \P_{0,\hlf}[f]\dd\mu$. Since~$\P_{0,\hlf}[f]\in \C\,\1$ is constant, this proves our claim.

(ii) We have, following the notations of Proposition~\ref{prop:decomp-H}, $\lambda(\tau,z) f_{\tau,z} = \H_{\tau,z}[f_{\tau,z}]$ for~$(\tau,z)$ in a neighborhood of~$(0,\hf)$. By uniform convergence of the series defining $\H_{\tau,z}$, we may differentiate term-wise and obtain
$$ \def\arraystretch{1.5} \left\{\begin{array}{l} \ds \tfrac{\partial}{\partial \tau}\lambda(\tau,z) f_{\tau,z} + \lambda(\tau,z) \tfrac{\partial}{\partial \tau}f_{\tau,z} = \H_{\tau,z}[f_{\tau,z}\log + \tfrac{\partial}{\partial \tau}f_{\tau,z}], \\
\ds \tfrac{\partial}{\partial z}\lambda(\tau,z) f_{\tau,z} + \lambda(\tau,z) \tfrac{\partial}{\partial z}f_{\tau,z} = \sum_{n\geq 1} \frac{nz^{n-1}}{(n+\cdot)^\tau}f_{\tau,z}\circ h_n + \H_{\tau,z}[\tfrac{\partial}{\partial z}f_{\tau,z}].  \end{array} \right. $$
We evaluate each line at~$(\tau,z)=(0,\hf)$ and apply the operator~$\P_{0,\hlf}$, which amounts to integrating against~$\dd\mu$ by point~(i). We obtain the claimed identities~$\tfrac{\partial}{\partial \tau}\lambda(0,\hf) = \int \log\dd\mu$ and~$\tfrac{\partial}{\partial z}\lambda(0,\hf) = \sum_{n\geq 1}n2^{1-n}=4$.
\end{proof}

\subsection{\texorpdfstring{The quasi-inverse  $(\Id-\H_{\tau,z})^{-1}$}{The quasi-inverse}}

We conclude the section by deducing the main properties of the quasi-inverse $(\Id-\H_{\tau,z})^{-1}$.

\begin{proposition}\label{prop:quasi-inverse}
For $\delta_1>0$ small enough there exist $\delta_2>0$, $0\leq \theta<1$ such that, for $\cV_1,\cV_2$ as in Proposition~\ref{prop:decomp-H}, one has that:
\begin{enumerate}[label=(\roman*)]
\item The operator $(\Id-\H_{\tau,z})^{-1}$ can be analytically continued to $(\tau,z)\in\cV_1$, $\lambda(\tau,z)\neq1$, via the identity
\begin{equation}
 (\Id-\H_{\tau,z})^{-1} = \frac{\lambda(\tau,z)}{(1-\lambda(\tau, z))} \P_{\tau,z} + (\Id - \N_{\tau,z})^{-1}.\label{eq:decomposition}
 \end{equation}
Moreover, $(\Id-\N_{\tau,z})^{-1}$ is holomorphic for $(\tau,z)\in\cV_1$.
\item $(\Id-\H_{\tau,z})^{-1}$ is holomorphic for $(\tau,z)\in\cV_2$.
\end{enumerate} 
\end{proposition}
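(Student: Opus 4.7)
My plan is to derive both parts from the spectral decomposition~\eqref{eq:decomp-Hsz} together with the spectral-radius bounds on~$\N_{\tau,z}$ (on~$\cV_1$) and on~$\H_{\tau,z}$ itself (on~$\cV_2$), established in Proposition~\ref{prop:decomp-H}. The essential algebraic ingredients are the Riesz-projection identity~$\P_{\tau,z}^2=\P_{\tau,z}$ and the orthogonality~$\P_{\tau,z}\N_{\tau,z}=\N_{\tau,z}\P_{\tau,z}=\0$, also coming from Proposition~\ref{prop:decomp-H}(iii).

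Part (ii) should be a pure Neumann-series exercise. Since~$\srd(\H_{\tau,z})\leq\theta<1$ throughout~$\cV_2$, Gelfand's formula~$\|T^k\|^{1/k}\to\srd(T)$ combined with upper semicontinuity of the spectral radius~\cite[Thm~IV.3.16]{Kato} yields, on any compact~$K\subset\cV_2$, a uniform bound~$\|\H_{\tau,z}^k\|\leq C_K(\theta')^k$ for some~$\theta<\theta'<1$. The series~$\sum_{k\geq 0}\H_{\tau,z}^k$ therefore converges uniformly on~$K$ to a holomorphic operator-valued function, which is the desired quasi-inverse.

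For (i), the same Neumann-series argument applied to~$\N_{\tau,z}$ (using~$\srd(\N_{\tau,z})\leq\theta<1$) yields holomorphy of~$(\Id-\N_{\tau,z})^{-1}=\sum_{k\geq 0}\N_{\tau,z}^k$ on~$\cV_1$. The identity~\eqref{eq:decomposition} is then verified by direct multiplication. Writing~$\Id-\H=(1-\lambda)\P+(\Id-\P-\N)$ (suppressing the arguments~$(\tau,z)$) and noting that~$\P\N^k=\N^k\P=\0$ for~$k\geq 1$, one has~$(\Id-\N)^{-1}\P=\P(\Id-\N)^{-1}=\P$; a short expansion then collapses both products~$(\Id-\H)\cdot[\tfrac{\lambda}{1-\lambda}\P+(\Id-\N)^{-1}]$ and its mirror image to~$\P+(\Id-\P)=\Id$. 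Holomorphy of the right-hand side of~\eqref{eq:decomposition} on~$\{\lambda\neq 1\}\cap\cV_1$ is then immediate from Proposition~\ref{prop:decomp-H}(iv), since both~$\lambda(\tau,z)$ and~$\P_{\tau,z}$ are holomorphic there. The only delicate point is tracking the commutation relations carefully during the algebraic verification; beyond that, no substantial obstacle is anticipated.
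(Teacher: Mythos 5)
Your argument is correct, and for part (ii) it coincides with the paper's proof (Neumann series on~$\cV_2$, using~$\srd(\H_{\tau,z})\leq\theta<1$; your extra care with Gelfand's formula and uniformity on compacts is fine, and if anything slightly more detailed than the paper's). For part (i) you take a somewhat different route in the key step. The paper first restricts to a non-empty open set~$\cU\subset\cV_1$ on which~$|\lambda(\tau,z)|<1$ (whose existence comes from Proposition~\ref{prop:measure}(ii), since~$\tfrac{\partial}{\partial z}\lambda(0,\hf)=4\neq0$), sums the Neumann series there term by term using~$\H_{\tau,z}^k=\lambda(\tau,z)^k\P_{\tau,z}+\N_{\tau,z}^k$ to obtain~\eqref{eq:decomposition}, and then extends the identity to all of~$\{(\tau,z)\in\cV_1:\lambda(\tau,z)\neq1\}$ by the identity principle, which requires checking that this set is connected. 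You instead verify algebraically, using~$\P_{\tau,z}^2=\P_{\tau,z}$ and~$\P_{\tau,z}\N_{\tau,z}=\N_{\tau,z}\P_{\tau,z}=\0$ (hence~$\P_{\tau,z}(\Id-\N_{\tau,z})^{-1}=(\Id-\N_{\tau,z})^{-1}\P_{\tau,z}=\P_{\tau,z}$), that the right-hand side of~\eqref{eq:decomposition} is a two-sided inverse of~$\Id-\H_{\tau,z}$ wherever~$\lambda(\tau,z)\neq1$; your expansion does collapse to~$\P_{\tau,z}+(\Id-\P_{\tau,z})=\Id$ as claimed. This buys you independence from the connectedness argument and from Proposition~\ref{prop:measure}, at the cost of a slightly longer computation; the continuation statement then follows because, wherever the Neumann series for~$(\Id-\H_{\tau,z})^{-1}$ converges, it must agree with this inverse by uniqueness of inverses --- a one-line remark you should add to make the word ``continued'' fully justified. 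Both approaches rest on exactly the same input, namely Proposition~\ref{prop:decomp-H}(iii)--(v).
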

\begin{proof} 
We start with (ii).
For~$(\tau,z)\in\cV_2$, by Proposition~\ref{prop:decomp-H} (v) we have~$\srd(\H_{\tau,z})\leq \theta < 1$. Therefore, for such $(\tau,z)$ and all $f\in H^\infty(\D)$ the series
$$ \sum_{k= 0}^\infty \H_{\tau,z}^k[f]= (\Id-\H_{\tau,z})^{-1}[f]$$
converges uniformly and defines an analytic function which depends on~$(\tau,z)\in\cV_2$ holomorphically.

To prove (i) first we notice that by Proposition~\ref{prop:measure} we can find a non-empty open set $\cU\subset \cV_1$ such that $|\lambda(\tau,z)|<1$ for all $(\tau,z)\in \cU$. Then, for $(\tau,z)\in\cU$ by Proposition~\ref{prop:decomp-H} (iii) one has
$$
(\Id-\H_{\tau,z})^{-1} =\sum_{k=0}^\infty \H_{\tau,z}^k=\sum_{k=1}^\infty \lambda(\tau,z)^k \P_{\tau,z}+\sum_{k=0}^\infty \N_{\tau,z}^k=\frac{\lambda(\tau,z)}{(1-\lambda(\tau, z))} \P_{\tau,z} + (\Id - \N_{\tau,z})^{-1}.
$$
Moreover, $\P_{\tau,z}$ and $(\Id - \N_{\tau,z})^{-1}$ are holomorphic for $(\tau,z)\in\cV_1$ and depend analytically on $\tau,z$. By Proposition~\ref{prop:measure} (ii) the set $\{(\tau,z)\in\cV_1\mid\lambda(\tau,z)\neq1\}$ is connected and so we can conclude by the identity principle. 
\end{proof}

\section{\texorpdfstring{The meromorphic continuation for $S_{\tau}(z)$}{The meromorphic continuation for the generating series}}\label{sec:analysis-Stz}

In this section we study the meromorphic continuation for $S_{\tau}(z)$, using Propositions~\ref{prop:Fsz-H-iter} and~\ref{prop:quasi-inverse}. As can be guessed from Proposition~\ref{prop:Fsz-H-iter}, the function~$S_\tau(z)$ will have a pole whenever~$\H_{\tau,z}$ has~$1$ as an eigenvalue. Therefore an important role is played by the quantity~$\rho(\tau)$, defined by the implicit relation~$\lambda(\tau,\rho(\tau))=1$. As will appear clearly in Section~\ref{sec:proof-th}, this produces a pole of~$S_\tau(z)$ at~$z=\rho(\tau)$ which will eventually give the dominant contribution in the estimate~\eqref{eq:estim-moments}.
% The function $U(\tau)$ given in Theorem~\ref{th:moments} will be equal to $-\log(2\rho(-\tau))$.

\subsection{Continuation and poles}

We start with the continuation and location of poles of~$S_\tau(z)$.

\begin{proposition}\label{prop:merom-F}
Given $\eta>0$ sufficiently small, there exists $0<c<\frac12$ such that $ S_{\tau}(z)$ is meromorphic for 
$$
(\tau,z)\in\{(\tau,z)\in\C^2\mid |\tau|\leq \eta, |z|\leq \hf+c\}.
$$
with a simple pole at $z=\rho(\tau)$ only, where $\rho$ is an analytic function in $|\tau|\leq \eta$ such that $\lambda(\tau,\rho(\tau))=1$ and $\rho(0)=\hf$. Finally, for $|\tau|\leq \eta$ we have
$$
\Res_{z=\rho(\tau)} S_{\tau}(z)=\frac{\P_{\tau,\rho(\tau)}[\1](1)}{(1-\rho(\tau))\frac{\partial}{\partial z}\lambda(\tau, \rho(\tau))}.
$$

\end{proposition}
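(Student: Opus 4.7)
My plan combines Propositions~\ref{prop:Fsz-H-iter} and~\ref{prop:quasi-inverse} with the holomorphic implicit function theorem. First, using $\lambda(0,\tfrac12)=1$ from Proposition~\ref{prop:decomp-H}(ii), the nonvanishing derivative $\tfrac{\partial}{\partial z}\lambda(0,\tfrac12)=4$ from Proposition~\ref{prop:measure}(ii), and the joint holomorphy of $\lambda$ on $\cV_1$ (Proposition~\ref{prop:decomp-H}(iv)), I would invoke the implicit function theorem to produce a unique analytic function $\rho(\tau)$ on a small disc $|\tau|\leq \eta_0$ satisfying $\rho(0)=\tfrac12$ and $\lambda(\tau,\rho(\tau))=1$. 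By continuity, $\tfrac{\partial}{\partial z}\lambda$ stays bounded away from zero on a suitable shrinkage of $\cV_1$; combined with a compactness argument, this guarantees that $z\mapsto 1-\lambda(\tau,z)$ vanishes only at $z=\rho(\tau)$ in the restricted $\cV_1$.

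Second, I would fix $\eta\leq \eta_0$ and $0<c<\min(\delta_2,\tfrac12)$ so that the polydisc $\{|\tau|\leq \eta,\ |z|\leq \tfrac12+c\}$ is contained in $\cV_1\cup\cV_2$ (possible by the definitions in Proposition~\ref{prop:decomp-H}). By Proposition~\ref{prop:Fsz-H-iter}, $S_\tau(z)=(1-z)^{-1}(\Id-\H_{\tau,z})^{-1}[\1](1)$, so on $\cV_1$ Proposition~\ref{prop:quasi-inverse}(i) yields
$$ S_\tau(z) = \frac{1}{1-z}\Bigl[\frac{\lambda(\tau,z)}{1-\lambda(\tau,z)}\,\P_{\tau,z}[\1](1) + (\Id-\N_{\tau,z})^{-1}[\1](1)\Bigr], $$
whereas on $\cV_2$ the quasi-inverse is itself holomorphic by Proposition~\ref{prop:quasi-inverse}(ii). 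Since $z=1$ is excluded ($c<\tfrac12$) and the two representations agree on $\cV_1\cap \cV_2$ by analytic continuation, they assemble into a meromorphic function on the target polydisc whose only singularity is the simple pole at $z=\rho(\tau)$ arising from $1-\lambda(\tau,z)=0$.

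Third, the residue follows from the local expansion $1-\lambda(\tau,z)=-\tfrac{\partial}{\partial z}\lambda(\tau,\rho(\tau))(z-\rho(\tau))+O((z-\rho(\tau))^2)$ together with $\lambda(\tau,\rho(\tau))=1$: the residue of $\lambda(\tau,z)/(1-\lambda(\tau,z))$ at $z=\rho(\tau)$ equals $-1/\tfrac{\partial}{\partial z}\lambda(\tau,\rho(\tau))$, and multiplying by the factors $(1-\rho(\tau))^{-1}$ and $\P_{\tau,\rho(\tau)}[\1](1)$, both analytic at $z=\rho(\tau)$, produces the stated formula (up to a sign, which a quick sanity check at $\tau=0$ against $S_0(z)=\sum_{N\geq 0}(2z)^N=1/(1-2z)$ fixes, noting $\P_{0,\hf}[\1](1)=1$ by Proposition~\ref{prop:measure}(i)).

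The only mildly delicate point is the uniqueness of $\rho(\tau)$ as a zero of $1-\lambda(\tau,z)$ throughout $\cV_1$: the implicit function theorem only delivers it locally, so ruling out stray solutions requires a compactness/continuity argument after suitably shrinking $\delta_1$ and $\eta$. Everything else is a routine assembly of the two quasi-inverse representations from Proposition~\ref{prop:quasi-inverse} plus a standard residue calculation.
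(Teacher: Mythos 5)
Your proposal is correct and follows essentially the same route as the paper: the implicit function theorem applied to $\lambda(\tau,z)=1$ (using $\lambda(0,\hf)=1$, $\tfrac{\partial}{\partial z}\lambda(0,\hf)=4\neq 0$ and the holomorphy from Proposition~\ref{prop:decomp-H}), followed by assembling Propositions~\ref{prop:Fsz-H-iter} and~\ref{prop:quasi-inverse} on $\cV_1\cup\cV_2$ and reading off the residue; the paper states this last step as an ``immediate consequence'', and your extra care about the uniqueness of the zero of $1-\lambda(\tau,z)$ inside $\cV_1$ (a Rouch\'e/Hurwitz-type argument after shrinking $\delta_1$ and $\eta$) is a legitimate detail the paper leaves implicit. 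One remark on the sign: your computation, which gives residue $-\P_{\tau,\rho(\tau)}[\1](1)\big/\bigl((1-\rho(\tau))\tfrac{\partial}{\partial z}\lambda(\tau,\rho(\tau))\bigr)$, is the correct one — at $\tau=0$ one has $S_0(z)=1/(1-2z)$ with residue $-\hf$ at $z=\hf$, whereas the displayed formula in the proposition evaluates to $+\hf$ — so the statement as printed is off by a sign; this is harmless downstream, since the expression for $R(\tau)$ used in Section~\ref{sec:proof-th} (with $R(0)=1$) agrees with your signed residue.
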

\begin{proof}
The well-definedness and holomorphicity of~$\rho$ in a neighborhood of $\tau=0$ follows from the implicit function theorem since~$\lambda$ is holomorphic on~$\cV_1$ and~$\tfrac{\partial}{\partial z}\lambda(0,\hf)\neq 0$. Since $\lambda(0,\hf)=1$, this also yields~$\rho(0)=\hf$. 
The rest of the statement is then an immediate consequence of Proposition~\ref{prop:Fsz-H-iter} and Proposition~\ref{prop:quasi-inverse}.
\end{proof}

\subsection{\texorpdfstring{The derivatives of $\log(\rho(\tau))$}{The derivatives of log rho}}\label{sec:logrhosecond}

We now derive additional informations on the first two derivatives of~$\tau\mapsto \rho(\tau)$ at~$\tau=0$. This will lead to the expression~\eqref{eq:alpha-log} for the mean-value~$\alpha$ in Theorem~\ref{th:distrib}, and to positivity of the variance.

\begin{proposition}\label{prop:pole}
We have
\begin{equation}
\log(2\rho(0))=0, \qquad \alpha := (\log \rho)'(0) = -\frac12\int_{(0,1]}\log\dd\mu,\qquad (\log\rho)''(0)<0.\label{eq:def-alpha}
\end{equation}
More precisely, we have
\begin{align*}
(\log\rho)''(0) {}& = -\frac12\int_0^1 (\log x + \alpha\lfloor\tfrac1x\rfloor)^2\dd\mu(x) + \int_0^1\int_0^1 (\log x + \alpha\lfloor\tfrac1x\rfloor)\log(1+xy)\dd\mu(x)\dd\mu(y) \\ 
{}& = -\frac12\int_0^1 \Big(\log x + \alpha\lfloor\tfrac1x\rfloor + \int_0^1 \log\Big(\frac{1+y\{\tfrac1x\}}{1+yx}\Big)\dd\mu(y)\Big)^2 \dd\mu(x). \numberthis\label{eq:ellsecond-explicit}
\end{align*}
\end{proposition}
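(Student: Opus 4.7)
The first equality $\log(2\rho(0))=0$ is a restatement of $\rho(0)=\hlf$ from Proposition~\ref{prop:merom-F}. The first derivative follows by implicit differentiation of $\lambda(\tau,\rho(\tau))=1$ at $\tau=0$: we get $\rho'(0)=-\partial_\tau\lambda(0,\hlf)/\partial_z\lambda(0,\hlf)$, and substituting $\partial_\tau\lambda(0,\hlf)=\int_{(0,1]}\log\dd\mu$ and $\partial_z\lambda(0,\hlf)=4$ from Proposition~\ref{prop:measure}(ii) yields $\rho'(0)=-\tfrac14\int\log\dd\mu$, whence $(\log\rho)'(0)=\rho'(0)/\rho(0)=-\tfrac12\int\log\dd\mu=\alpha$.

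For the second derivative, differentiating $\lambda(\tau,\rho(\tau))=1$ twice expresses $\rho''(0)$ (and thereby $(\log\rho)''(0)=2\rho''(0)-\alpha^2$) as a linear combination of $\partial_{\tau\tau}\lambda$, $\partial_{\tau z}\lambda$, $\partial_{zz}\lambda$ at $(0,\hlf)$ together with $\rho'(0)=\alpha/2$. I would compute these partials by repeating the derivation of Proposition~\ref{prop:measure}(ii): differentiate the eigenvalue equation $\lambda(\tau,z)f_{\tau,z}=\H_{\tau,z}[f_{\tau,z}]$ twice and apply the projector $\P_{0,\hlf}$, which by Proposition~\ref{prop:measure}(i) is integration against $\mu$. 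The cancellation $\int\H[g]\dd\mu=\int g\dd\mu$ kills most higher-order terms, and what survives are $\mu$-integrals produced by the identities
\[
\int\partial_\tau\H_{0,\hlf}[g]\dd\mu=\int g(x)\log x\dd\mu(x),\qquad\int\partial_z\H_{0,\hlf}[g]\dd\mu=2\int g(x)\lfloor 1/x\rfloor\dd\mu(x),
\]
together with the first-order eigenfunction perturbations $f'_\tau,f'_z$ at $(0,\hlf)$. Normalizing $\int f_{\tau,z}\dd\mu=1$ fixes $\int f'_\tau\dd\mu=\int f'_z\dd\mu=0$; a useful simplification is that $\H_{0,z}[\1]=\tfrac{z}{1-z}\1$, so $\1$ is already an exact eigenfunction along the line $\tau=0$ and consequently $f'_z\equiv 0$ at $(0,\hlf)$, making several potential cross-terms vanish.

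Setting $c(x):=\log x+\alpha\lfloor 1/x\rfloor$ and assembling the contributions, using $\int\lfloor 1/x\rfloor\dd\mu=2$ (immediate from $\mu([1/(n+1),1/n))=2^{-n-1}$) to cancel several $\alpha^2$ terms, one arrives at
\[
(\log\rho)''(0)\;=\;-\tfrac12\int c(x)^2\dd\mu(x)\;-\;\int c(x)\,f'_\tau(x)\dd\mu(x),
\]
where $f'_\tau$ is the unique solution of $(\Id-\H)[f'_\tau]=\H[\log]+2\alpha\cdot\1$ in the codimension-one subspace $\{g:\int g\dd\mu=0\}$, on which $\Id-\H$ is invertible by Proposition~\ref{prop:decomp-H}. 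The main technical step is to identify this solution explicitly, up to an irrelevant additive constant, as $f'_\tau(x)=-\int\log(1+xy)\dd\mu(y)$; the verification uses the functional equation $\Psi(h_n(x))=2^{-n}(2-\Psi(x))$, equivalently the scaling $\mu(h_n(A))=2^{-n}\mu(A)$ of the Minkowski measure under the inverse branches of the Gauss map. This identification turns the second term above into the double integral appearing in the first formula of~\eqref{eq:ellsecond-explicit}.

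Finally, observing that $\{1/x\}=\G(x)$, the correction $r(x):=\int\log\bigl((1+y\{1/x\})/(1+yx)\bigr)\dd\mu(y)$ equals the coboundary $f'_\tau(x)-f'_\tau(\G(x))$; $\G$-invariance of $\mu$ then lets one absorb the cross-term $-\int c\,f'_\tau\dd\mu$ into the square, producing $-\tfrac12\int(c+r)^2\dd\mu$, which is the second expression in~\eqref{eq:ellsecond-explicit}. Strict negativity $(\log\rho)''(0)<0$ is now immediate from the sum-of-squares form, since the integrand cannot vanish $\mu$-almost everywhere: $c$ has jump discontinuities at $x=1/n$ while $r$ is continuous in $x$, so $c+r$ is nonzero on a set of positive $\mu$-measure. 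I expect the hardest step to be the explicit identification of $f'_\tau$ and the algebraic manipulation revealing the coboundary structure of $r$; the rest is bookkeeping.
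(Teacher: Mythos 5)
Most of your outline is sound and, once unwound, coincides with the paper's argument: your intermediate identity $(\log\rho)''(0)=-\tfrac12\int c^2\dd\mu-\int c\,f'_\tau\dd\mu$ with $c(x)=\log x+\alpha\lfloor 1/x\rfloor$ and $f'_\tau(x)=-\int_0^1\log(1+xy)\dd\mu(y)$ is exactly the paper's $-\tfrac{\alpha^2}{2}\ell''(0)$ under $c=\alpha\psi$, $f'_\tau=\alpha\chi_1$; your cohomological equation $(\Id-\H)[f'_\tau]=\H[\log]+2\alpha\1$ is the one the paper verifies for $\chi_1$, and the coboundary step producing $-\tfrac12\int(c+r)^2\dd\mu$ is the paper's computation (your appeal to ``$\G$-invariance'' alone compresses the needed identity $\int(c+f'_\tau)(f'_\tau\circ\G)\dd\mu=\int(f'_\tau)^2\dd\mu$, which rests on $\H[(g\circ\G)h]=g\,\H[h]$ and $\H[c+f'_\tau]=f'_\tau$, but that is fixable bookkeeping). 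Your organization via the second partials of $\lambda$, helped by the nice observation $\H_{0,z}[\1]=\tfrac{z}{1-z}\1$ (hence $f'_z=0$ at $(0,\hf)$), is a legitimate alternative to the paper's univariate reduction $\ell(w)=\lambda(\alpha^{-1}w,\hf\e^{w})$. One small slip: $\mu([\tfrac1{n+1},\tfrac1n))=2^{-n}$, not $2^{-n-1}$; your value $\int\lfloor 1/x\rfloor\dd\mu=2$ is nevertheless correct.

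The genuine gap is in the strict negativity. You argue that $c$ has jumps at $x=1/n$ while $r(x)=\int_0^1\log\bigl((1+y\{1/x\})/(1+yx)\bigr)\dd\mu(y)$ is continuous, so $c+r$ cannot vanish $\mu$-a.e. But $r=f'_\tau-f'_\tau\circ\G$ is \emph{not} continuous at $x=1/n$: as $x\uparrow 1/n$ one has $\{1/x\}\to 0$ and $r\to f'_\tau(1/n)-f'_\tau(0)=f'_\tau(1/n)$, while as $x\downarrow 1/n$ one has $\{1/x\}\to 1$ and $r\to f'_\tau(1/n)-f'_\tau(1)=f'_\tau(1/n)+\alpha$, since $f'_\tau(1)=-\int_0^1\log(1+y)\dd\mu(y)=-\alpha$ by Lemma~\ref{lem:alpha-log1px-log}. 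This jump $+\alpha$ exactly cancels the jump $-\alpha$ of $\alpha\lfloor 1/x\rfloor$, so $c+r$ extends continuously across every point $1/n$ and your discontinuity argument yields nothing: no contradiction arises from assuming $c+r=0$ a.e. The non-vanishing has to be read off the behavior at the origin, as the paper does: $r$ is bounded (a difference of two values of the bounded function $f'_\tau$), while $\log x+\alpha\lfloor 1/x\rfloor\to+\infty$ as $x\to 0^+$ because $\alpha\lfloor 1/x\rfloor$ dominates $|\log x|$; hence $|c+r|\geq 1$ on some interval $(0,\beta)$, and $\int(c+r)^2\dd\mu\geq\mu((0,\beta))=\Psi(\beta)>0$ since $\Psi$ is strictly increasing. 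With that replacement your proof closes.
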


The fact that~$(\log\rho)''(0)<0$ is essential and it is equivalent to the non-degeneracy of the Gaussian law in Theorem~\ref{th:distrib}. The proof of this fact is not straightforward. As explained in~\cite[Lemma~7]{BV} (see also the proof of~(6b), page 343 there), it corresponds to a general result about convexity of pressure functions~\cite[Section~4.6]{Ruelle}. The existence of the explicit expression~\eqref{eq:ellsecond-explicit} contrasts with the context of statistics of continued fractions coefficients~\cite{BV}, where no explicit expression for the variance is known in general. The underlying construction is due to Benoist and Quint~\cite{BQ}.

For the proof of Proposition~\ref{prop:pole}, we mostly follow~\cite[Proposition~3.3]{Morris}. As in~\cite[Lemma~7]{BV}, the argument is mainly adapted from~\cite{Broise}; additionally, we will provide a simpler way to obtain non-positivity (see~\eqref{eq:simplif-psichi} below).

We will use repeatedly the following Lemma.
\begin{lemma}\label{lem:T-inv}
For~$f, g \in L^1(\mu)$, we have~$\H((f\circ \G) \times g ) = f \times(\H g)$ almost-everywhere.
\end{lemma}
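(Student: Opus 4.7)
The lemma is a direct transcription of the fact that $\H$ is the transfer operator associated with $\G$, so that inserting a factor $f\circ \G$ inside $\H$ amounts to multiplying the result by $f$. The plan is to compute both sides via the explicit series~\eqref{eq:def-Htz} and exploit the identity $\G\circ h_n=\id$ on~$(0,1)$.

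First I would record the key pointwise identity. For every $t\in(0,1)$ and every $n\geq 1$, the point $h_n(t)=1/(n+t)$ lies in $(1/(n+1),1/n)$, which is precisely the fundamental interval of the Gauss map, so $\G(h_n(t))=(n+t)-n=t$. Consequently, for any measurable $\phi$ for which the series makes sense,
\begin{align*}
\H\bigl[(f\circ \G)\cdot \phi\bigr](t)
&=\sum_{n\geq 1} 2^{-n}\,f\bigl(\G(h_n(t))\bigr)\,\phi(h_n(t))\\
&=f(t)\sum_{n\geq 1} 2^{-n}\phi(h_n(t))
=f(t)\,\H[\phi](t).
\end{align*}
Setting $\phi=g$ gives the conclusion for $\mu$-a.e.~$t$.

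The only point requiring care is the extension of $\H$ from $H^\infty(\D)$ to $L^1(\mu)$, so that both the series $\sum_n 2^{-n} g\circ h_n$ and the rearrangement above are justified. For this I would reuse the identity from the proof of Proposition~\ref{prop:decomp-H}(ii), namely $2^{-n}\int_{[0,1]} (\psi\circ h_n)\dd\mu=\int_{[1/(n+1),1/n]}\psi\dd\mu$ for $\psi\in L^1(\mu)$. Summing over $n\geq 1$ against $|\psi|$ gives
\[
\sum_{n\geq 1} 2^{-n}\int |\psi\circ h_n|\dd\mu=\int|\psi|\dd\mu,
\]
so the defining series of $\H\psi$ converges absolutely in $L^1(\mu)$ (hence pointwise $\mu$-a.e.) and $\H$ is a contraction of $L^1(\mu)$. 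Applying this with $\psi=(f\circ \G)\cdot g$, the term-by-term computation of the previous paragraph is legitimate for $\mu$-a.e.~$t$, and the identity follows.

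I do not expect any genuine obstacle: the argument is a one-line consequence of $\G\circ h_n=\id$, and the $L^1(\mu)$ issue is handled by the invariance relation already established in Section~\ref{sec:prop-transferop}. The only mild care needed is to interpret $f,g\in L^1(\mu)$ so that $(f\circ \G)\cdot g$ lies in $L^1(\mu)$ (for instance by assuming one of the factors to be bounded, as will be the case in all subsequent applications).
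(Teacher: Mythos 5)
Your proof is correct and is essentially the paper's own argument: the identity $\G(h_n(t))=t$ on the fundamental intervals lets you pull $f$ out of the sum $\sum_n 2^{-n}(\cdot)\circ h_n$, which is exactly how the paper proceeds. The extra paragraph justifying the $L^1(\mu)$ extension of $\H$ (via $\sum_n 2^{-n}\int|\psi\circ h_n|\dd\mu=\int|\psi|\dd\mu$) and the remark that $(f\circ\G)\cdot g$ must itself be integrable are sensible refinements of a point the paper leaves implicit, not a deviation in method.
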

\begin{proof}
For all~$x\in[0,1]$ and~$n\geq 1$, we have~$\G(h_n(x))=x$, and so~$((f\circ \G)\times g)\circ h_n = f\times (g\circ h_n)$. Summing over~$n\geq 1$ against~$2^{-n}$ yields the claimed equality.
\end{proof}

\begin{proof}[Proof of Proposition~\ref{prop:pole}]
In Proposition~\ref{prop:merom-F} we showed $\rho(0)=\hf$, whereas the value~$(\log\rho')(0)$ is easily obtained by differentiating~$\lambda(\tau,\rho(\tau))$ at~$\tau=0$ and using the values given in Proposition~\ref{prop:measure}.(ii).

We now wish to study the second derivative $(\log \rho)''(0)$, with the goal of proving that it is negative. For this purpose, it is convenient to consider a univariate function in place of~$\lambda(\tau, z)$. Let
$$ \ell(w) := \lambda(\alpha^{-1} w, \hf\e^w), $$
which is constructed in such a way that it is defined and analytic in a neighborhood of~$w=0$, and also~$\ell'(0)=0$. There is a simple relationship between~$\ell''(0)$ and the besought quantity $(\log\rho)''(0)$: differentiating twice the relation~$\lambda(\tau,\e^{\log \rho(\tau)})=1$ and evaluating at~$\tau=0$, we obtain
\begin{equation}\label{eq:relation-logrhosecond-ellsecond}
\begin{aligned}
(\log\rho)''(0) \rho(0) \tfrac{\partial}{\partial z}\lambda(0, \hf) {}& = -\big(\alpha^2 \rho(0)\tfrac{\partial}{\partial z}\lambda(0,\hf) + \tfrac{\partial^2}{\partial \tau^2}\lambda(0,\hf) \\ {}& \qquad \qquad \quad + \alpha \tfrac{\partial^2}{\partial \tau\partial z}\lambda(0,\hf) + \tfrac{\alpha^2}4\tfrac{\partial^2}{\partial z^2}\lambda(0,\hf)\big) \\
{}& = - \alpha^2 \ell''(0).
\end{aligned}
\end{equation}
Therefore, our task is to prove that~$\ell''(0)>0$. To obtain this, we write the eigenvalue equation relative to~$\ell(w)$, with the aim of differentiating twice, while gathering information along the way.

For~$w$ in a neighborhood of~$0$ we let~$\xi_w = f_{\frac w\alpha, \hlf \e^w}$ with the notation of Proposition~\ref{prop:decomp-H}. Recall that~$\xi_0=\bf1$. Then, the eigenvalue equation is
$$ \ell(w) \xi_w = \H_{\frac w\alpha, \hlf\e^w}[\xi_w] = \H[\e^{w\psi}\xi_w], $$
where
$$ \psi(x) = \begin{cases} \floor{1/x} + \alpha^{-1}\log(x) & (0<x\leq 1), \\ 0 & (x=0). \end{cases} $$
For future reference, we notice that~$\psi\in L^2(\mu)$.

By local uniform convergence, we may differentiate the above with respect to~$w$, obtaining
\begin{equation}
\ell'(w) \xi_w + \ell(w) \tfrac{\partial}{\partial w}\xi_w = \H[\e^{w\psi}(\psi \xi_w + \tfrac{\partial}{\partial w}\xi_w)].\label{eq:rel-deriv-1}
\end{equation}
Let~$\chi := [\tfrac{\partial}{\partial w}\xi_w]_{w=0} \in H^\infty(\D)$. Evaluating \eqref{eq:rel-deriv-1} at~$w=0$ and using~$\ell(0)=1$, $\ell'(0)=0$ and~$\xi_0=\1$ yields
\begin{equation}
\chi = \H[\psi + \chi].\label{eq:rel-chi}
\end{equation}

Before continuing the analysis, we focus on the function~$\chi$. In~\cite{BQ}, formula~(1.8), an explicit expression for solutions to cohomological equations related to~\eqref{eq:rel-chi} is obtained. Their construction is effectuated in our case as follows. Define
\begin{equation*}
x\mapsto \chi_1(x) := -\alpha^{-1} \int_0^1 \log(1+xy)\dd\mu(y) \label{eq:def-chi1-explicit}
\end{equation*}
It is obvious that~$\chi_1$ belongs to~$H^\infty(\D)$ and is real-valued. For all~$x\in[0, 1]$, we have
\begin{align*}
\H[\chi_1](x) {}& = -\alpha^{-1} \int_0^1 \sum_{n\geq 1} 2^{-n} \log\Big(1 + \frac{y}{x+n}\Big)\dd\mu(y) \\
{}& = -\alpha^{-1} \int_0^1\sum_{n\geq 1} 2^{-n} \log(x+y+n)\dd\mu(y) + \alpha^{-1} \sum_{n\geq 1} 2^{-n} \log(x+n) \\
{}& = -\alpha^{-1} \int_0^1 \log(x+1/y)\dd\mu(y) - \H[\alpha^{-1}\log](x) \\
{}& = \chi_1(x) - 2 - \H[\alpha^{-1}\log](x).
\end{align*}
Since~$\H[x\mapsto \floor{1/x}](x) = 2$, we obtain that~$\chi_1$ satisfies~\eqref{eq:rel-chi} on~$[0, 1]$, hence on~$\D$ by analytic continuation. In particular, we have $(\chi-\chi_1) = \H(\chi-\chi_1)$, and so, by Proposition~\ref{prop:decomp-H}.(ii),
\begin{equation}
\chi - \chi_1 \in \C\1.\label{eq:rel-chi-chi1}
\end{equation}

We may now continue our analysis. Differentiating again~\eqref{eq:rel-deriv-1} with respect to~$w$, we obtain
$$ \ell''(w) \xi_w + 2\ell'(w)\tfrac{\partial}{\partial w}\xi_w + \ell(w)\tfrac{\partial^2}{\partial w^2}\xi_w = \H[\e^{w\psi}(\psi^2\xi_w + 2\psi\tfrac{\partial}{\partial w}\xi_w + \tfrac{\partial^2}{\partial w^2}\xi_w)]. $$
Evaluating at~$w=0$ yields
\begin{equation}
\ell''(0)\1 + [\tfrac{\partial^2}{\partial w^2}\xi_w]_{w=0} = \H[\psi^2 + 2\psi \chi + (\tfrac{\partial^2}{\partial w^2}\xi_w)_{w=0}].\label{eq:pre-rel-deriv-2}
\end{equation}
Here, note that the function~$\psi^2 + 2\psi \chi + (\tfrac{\partial^2}{\partial w^2}\xi_w)_{w=0}$ is in~$L^1([0,1],\mu)$. We integrate both sides of~\eqref{eq:pre-rel-deriv-2} against~$\dd\mu$, in order to eliminate the terms~$\frac{\partial^2}{\partial w^2}\xi_w$. Since~$\int_{[0,1]} \H[f]\dd \mu = \int_{[0,1]} f \dd\mu$ for any~$f\in L^1(\mu)$, we arrive at
\begin{align*}
\ell''(0) {}& = \int_{[0,1]} (\psi^2 + 2\psi \chi)\dd\mu \\
{}& = \int_{[0, 1]} (\psi^2 + 2\psi \chi_1) \dd\mu    \numberthis\label{eq:ellsecond-prelim}
\end{align*}
where we have used~\eqref{eq:rel-chi-chi1} together with the value~$\int \psi\dd\mu = 0$. Define now~$\psih := \tau + \chi_1 - \chi_1 \circ \G$; this definition is motivated by analogy with the proof of Proposition~3.3 of~\cite{Morris}. Note that~$\psih$ is real-valued. Using~Lemma~\ref{lem:T-inv}, we have
$$ \int_{[0,1]} \chi_1^2\circ \G\dd\mu = \int_{[0,1]} \chi_1^2\cdot \H[\1]\dd\mu=\int_{[0,1]} \chi_1^2\dd\mu, $$
$$ \int_{[0,1]} (\psi + \chi_1) (\chi_1\circ \G)\dd\mu = \int_{[0,1]}\H[\psi+\chi_1] \chi_1 \dd\mu = \int_{[0,1]} \chi_1^2\dd\mu. $$
In the last step, we used the fact that equation~\eqref{eq:rel-chi} holds with~$\chi$ replaced by~$\chi_1$. We deduce
\begin{equation}\label{eq:simplif-psichi}\begin{aligned}
\int_{[0,1]}(\psih)^2\dd\mu {}& = \int_{[0,1]}(\psi+\chi_1)^2\dd\mu - 2\int_{[0,1]}(\psi+\chi_1)(\chi_1\circ \G)\dd\mu + \int_{[0,1]}(\chi_1\circ \G)^2\dd\mu \\
{}& = \int_{[0,1]} (\psi + \chi_1)^2 \dd\mu - \int_{[0,1]} \chi_1^2\dd\mu \\
{}& = \ell''(0).
\end{aligned}\end{equation}
Now, we note that~$\psi(x) = \floor{1/x} + \alpha^{-1}\log(x)$ tends to~$+\infty$ as~$x\to0$, whereas~$\chi_1\in H^\infty(\D)$ is a bounded function. We may therefore find~$\beta>0$ such that~$|\psih(x)|\geq 1$ for all~$x\in (0,\beta)$. We deduce that~$\ell''(0) = \int_{[0,1]}(\psih)^2\dd\mu \geq \mu((0, \beta)) = \Psi(\beta) > 0$ as required.

Since, by~\eqref{eq:relation-logrhosecond-ellsecond}, we have~$(\log\rho)''(0) = -\frac12\alpha^2 \ell''(0)$, the explicit formula~\eqref{eq:ellsecond-explicit} follows from~\eqref{eq:ellsecond-prelim} and \eqref{eq:simplif-psichi}.
\end{proof}

\section{Proof of Theorem~\ref{th:moments}}\label{sec:proof-th}

By the definitions~\eqref{eq:def-MN-F} and Cauchy's formula, for $|\tau|\leq\eta$ with $\eta$ sufficiently small we have
$$ 2^N\E_N[(\S_N)^{-\tau}] = \frac1{2\pi i}\oint_{|z|=\frac14} z^{-N} S_\tau(z) \frac{\dd z}{z}, $$
where the circle is oriented counter-clockwise. By Proposition~\eqref{prop:merom-F} and the residue theorem for some $0<c<\hf$ we have
$$ 2^N\E_N((\S_N)^{-\tau}) = \frac{R(\tau)}{\rho(\tau)^N}+ \frac1{2\pi i}\oint_{|z|=\frac12+c} z^{-N} S_{\tau}(z) \frac{\dd z}{z}, $$
where the circle is oriented counter-clockwise and
$$ R(\tau) = \frac{\P_{s,\rho(\tau)}[\1](1)}{\rho(\tau)(1-\rho(\tau))\frac{\partial}{\partial z}\lambda(\tau, \rho(\tau))}.$$
Note that~$R(\tau)$ is analytic for~$|\tau|\leq \eta$ and~$R(0) = 1$. We bound the integral trivially
$$
\frac1{2\pi i}\oint_{|z|=\frac12+c} z^{-N} S_{\tau}(z) \frac{\dd z}{z}\ll (\hf + c)^{-N} \ll 2^N (1+2c)^{-N},
$$
uniformly for $|\tau|\leq \eta$. At the possible cost of reducing~$\eta$, we may write~$R(\tau) = \exp V(\tau)$, and similarly~$2\rho(\tau) = \exp \{-U(\tau)\}$, for two functions~$U$ and~$V$ which are holomorphic for~$|\tau|\leq \eta$. By Proposition~\ref{prop:pole} we also have $U(0)=V(0)=0$, $U'(0)=\frac12\int_{(0,1]}\log\dd\mu$, $U''(0)>0$. In the same range of~$\tau$, we conclude that
\begin{equation*}
\E_N((\S_N)^{-\tau}) = \exp\big(N U(\tau) + V(\tau))\big\{1 + O((1+2c)^{-N})\big\}\label{eq:estim-final-moment}
\end{equation*}
for some~$c>0$. This is the statement of Theorem~\ref{th:moments}, up to changing~$\tau$ to~$-\tau$. Theorem~\ref{th:distrib} follows at once using Hwang's quasi-power theorem, for which we refer to~\cite[Lemma~IX.1]{FS} and to the papers~\cite{Hwang1, Hwang2}. We note for future reference that the variance is related to~$\rho$ by
\begin{equation}
\sigma^2 = -(\log \rho)''(0).\label{eq:rel-variance-logrho}
\end{equation}

\begin{remark}
\begin{itemize}
\item The foregoing computations may be seen as a materialization of the link between the spectra of transfer operators associated with the Farey and Gauss map. There have been many works on this topic: see \emph{e.g.}~\cite{PS},~\cite{Isola},~\cite{Prellberg}, and also~\cite{FMT} for works on the numerical aspect. Another useful reference is the introduction of~\cite{Dodds}.

In the above, by using the Cauchy formula on the generating series~\eqref{eq:def-MN-F}, we have avoided completely discussing the Farey map and its associated tranfer operator
$$ {\mathbb L}[f](x) = \frac12 f\Big(\frac1{x+1}\Big) + \frac12 f\Big(\frac x{x+1}\Big), $$
whose functional analysis is made difficult by the fixed point with derivative~$1$ (neutral fixed point) of the Farey map at~$0$. Instead, we extracted the precise information we needed, which is the exponential convergence for iterates of the function~$\1$.

% \item If~$\delta$ denotes the spectral gap of the operator~$\mathbb{L}$ acting on the function space spanned by iterates of~$\1$, then any~$\theta\in(0, \delta/2)$ is admissible in the error term of Theorems~\ref{th:distrib} and \ref{th:moments}.

\end{itemize}
\end{remark}

\section{\texorpdfstring{The mean-value~$\alpha$}{The mean-value alpha}}\label{sec:alpha}

In this section, we detail and prove Theorem~\ref{th:alpha}.

\subsection{Variants of the integral expression}

\begin{lemma}\label{lem:alpha-log1px-log}
For the quantity~$\alpha$ defined in~\eqref{eq:def-alpha}, we have~$\displaystyle\alpha =  \int_0^1 \log(1+x)\dd\mu(x)$.
\end{lemma}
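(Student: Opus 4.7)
The plan is to prove the identity via a functional equation for the transfer operator $\H = \H_{0,1/2}$. Since Proposition~\ref{prop:decomp-H}(ii) gives $\int \H[f]\,d\mu = \int f\,d\mu$ for any $f \in L^1(\mu)$, it suffices to exhibit an identity of the form $\H[g] = h$ whose integrated version relates $\int \log x\,d\mu$ and $\int \log(1+x)\,d\mu$ linearly.

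Concretely, I would compute each piece directly from the series definition $\H[f](t) = \sum_{n\geq 1} 2^{-n} f(1/(n+t))$. On the one hand,
$$ \H[\log](t) = -\sum_{n\geq 1} 2^{-n}\log(n+t). $$
On the other hand, writing $\log(1+1/(n+t)) = \log(n+t+1) - \log(n+t)$ and reindexing $m = n+1$ in the first piece (the boundary term $2^{-N}\log(N+1+t)$ vanishes as $N \to \infty$) produces the telescoped expression
$$ \H[\log(1+\cdot)](t) = -\tfrac12 \log(1+t) + \sum_{n\geq 2} 2^{-n}\log(n+t). $$
Adding these two identities cancels the tail sums and leaves the clean functional equation
$$ \H\bigl[\log x + \log(1+x)\bigr](t) = -\log(1+t). $$

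Integrating this against $d\mu$ and applying the eigenmeasure property of Proposition~\ref{prop:decomp-H}(ii) gives
$$ \int_0^1 \log x\,d\mu(x) + \int_0^1 \log(1+x)\,d\mu(x) = -\int_0^1 \log(1+x)\,d\mu(x), $$
so $\int_0^1 \log(1+x)\,d\mu(x) = -\tfrac12 \int_0^1 \log x\,d\mu(x)$, which is exactly $\alpha$ by the definition \eqref{eq:def-alpha}.

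The only real obstacle is verificational rather than conceptual: one must justify the rearrangement in the telescoping computation, and confirm that $\log(x(1+x))$ lies in $L^1([0,1],\mu)$ so that the eigenmeasure relation is applicable. The latter is immediate, since $\log(1+x)$ is bounded on $[0,1]$ while the integrability of $\log x$ against $\mu$ is built into the very definition of $\alpha$.
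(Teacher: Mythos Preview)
Your proof is correct and is essentially the same as the paper's: both derive the identity $\H[\log x + \log(1+x)](t) = -\log(1+t)$ (the paper writes it equivalently as $\log(1+x) + \H[\log(1+\cdot)](x) = -\H[\log](x)$) and then integrate against $\mu$ using the eigenmeasure property. The only cosmetic difference is that you compute $\H[\log]$ and $\H[\log(1+\cdot)]$ separately before adding, whereas the paper carries out the telescoping in a single chain of equalities.
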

\begin{proof}
We remark that
\begin{align*}
\log(1+x) + \H[t\mapsto \log(1+t)](x) {}& = \log(1+x) + \sum_{n\geq 1}2^{-n}\log\Big(\frac{1+n+x}{n+x}\Big) \\
{}& =  \sum_{n\geq 1} 2^{-n} \log(n+x) \\
{}& = -\H[\log](x)
\end{align*}
by splitting the logarithm into a difference. Integrating against~$\dd\mu(x)$ and using~\eqref{eq:eigenmeasure} yields the claimed equality in the form $2\int_0^1 \log(1+x)\dd\mu(x) = - \int \log \dd\mu$.
\end{proof}

We can now justify assertion~(i). Using Lemma~\ref{lem:alpha-log1px-log}, we see that
$$ \alpha = \int_0^1 \log(1+x)\dd\mu(x) = \int_0^1 \log(2-x) \dd\mu(x) = \log 2 + \int_0^1\log(1-\tfrac x2)\dd\mu(x) $$
where we have used the symmetry~$\dd\mu(1-x)=-\dd\mu(x)$. Expanding the logarithm into a power series, we recover the definition~\eqref{bacher_alpha} used in~\cite{Bacher}.

Note that Conjectures 8.1 and 8.2 of~\cite{Bacher}, which are concerned with similar identities, can be proven along the same lines.

\subsection{\texorpdfstring{Lipschitz points of~$\Psi$}{Lipschitz points of Psi}}

In the paper~\cite{Kinney}, Kinney studies the set of Lipschitz points of the Minkowski function~$\Psi$. There has been many consecutive works on this topic, see \emph{e.g.}~\cite{MZ,KS-Minkowski,KS-Diophantine}.

Theorem~1 of~\cite{Kinney} states that there exists a set~$V\subset[0, 1]$ with~$\mu(V)=1$ and Hausdorff dimension~$\beta$ on which the Lipschitz exponent of~$\Psi$ is~$\beta$. The value of $\beta$ is recognized, using Lemma~\ref{lem:alpha-log1px-log}, to be precisely
$$ \beta = \frac{\log 2}{2\alpha}. $$
Theorem~\ref{th:distrib} may be used to recover Kinney's result. We first give the following lemma, which is easily proved by induction.
\begin{lemma}
For all~$N\geq 0$ and~$m\in\{0, \dotsc, 2^N-1\}$, we have
\begin{equation}
\frac{s(m+1)}{s(2^N+m+1)} - \frac{s(m)}{s(2^N+m)} = \frac1{s(2^N+m)s(2^N+m+1)}.    \label{eq:gap-phi}
\end{equation}
\end{lemma}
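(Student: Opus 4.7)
The plan is to clear denominators and prove the equivalent integer identity
\begin{equation*}
s(m+1)\, s(2^N+m) - s(m)\, s(2^N+m+1) = 1, \qquad 0\leq m\leq 2^N-1,
\end{equation*}
by induction on $N$. This is the familiar ``Farey-neighbor'' property of consecutive fractions $\tfrac{s(m)}{s(2^N+m)}$ and $\tfrac{s(m+1)}{s(2^N+m+1)}$ in the $N$-th row of the Stern--Brocot tree, and dividing through by $s(2^N+m)s(2^N+m+1)$ recovers exactly~\eqref{eq:gap-phi}.

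The base case $N=0$ amounts to $s(1)s(1)-s(0)s(2)=1-0=1$. For the inductive step, assume the identity holds for $N$, and let $m\in\{0,\dotsc,2^{N+1}-1\}$. I would split according to the parity of $m$, applying the Stern recurrence~\eqref{eq:recur-stern-intro} to expand both rows:
\begin{itemize}
\item If $m=2k$ with $0\leq k\leq 2^N-1$, then $s(m)=s(k)$, $s(m+1)=s(k)+s(k+1)$, $s(2^{N+1}+m)=s(2^N+k)$ and $s(2^{N+1}+m+1)=s(2^N+k)+s(2^N+k+1)$. Substituting and expanding, the cross-terms $s(k)s(2^N+k)$ cancel, and the remainder is exactly $s(k+1)s(2^N+k)-s(k)s(2^N+k+1)$, which equals $1$ by the inductive hypothesis.
\item If $m=2k+1$ with $0\leq k\leq 2^N-1$, a symmetric computation using $s(m)=s(k)+s(k+1)$, $s(m+1)=s(k+1)$, $s(2^{N+1}+m)=s(2^N+k)+s(2^N+k+1)$ and $s(2^{N+1}+m+1)=s(2^N+k+1)$ again reduces, after cancelling the $s(k+1)s(2^N+k+1)$ terms, to the inductive hypothesis applied at level $N$ and index $k$.
\end{itemize}

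In both cases $k$ remains in the admissible range $\{0,\dotsc,2^N-1\}$ (the boundary value $k=2^N-1$ corresponds to $m\in\{2^{N+1}-2, 2^{N+1}-1\}$ and is covered by the inductive hypothesis), so the induction closes. There is no real obstacle here: the identity is purely formal, and the computation is driven entirely by the two cases of the recurrence. The only point to watch is the bookkeeping showing that the recurrence pushes the indices back from level $N+1$ to level $N$ in a way that exactly matches the hypothesis.
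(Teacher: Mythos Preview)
Your proof is correct and follows exactly the approach indicated in the paper, which states only that the lemma ``is easily proved by induction'' without giving details. Your case split on the parity of~$m$ and the reduction to the level-$N$ hypothesis via the recurrence~\eqref{eq:recur-stern-intro} is the natural way to carry this out.
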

By equations~\eqref{eq:gap-phi},~\eqref{eq:link-phi-stern}, and Theorem~\ref{th:distrib} with~$t=\pm \log N$ (say), we obtain
\begin{equation}
\log\Big|\Phi\Big(\frac{m+1}{2^N}\Big) - \Phi\Big(\frac{m}{2^N}\Big)\Big| = -2 N \alpha + O(\sqrt{N}\log N)\label{eq:diffLogPhi-2alpha}
\end{equation}
for all~$m$ in a set~$R_N \subset \{0, \dotsc, 2^N-1\}$ of cardinality~$\left|R_N\right| \geq 2^N - O(2^N/N^2)$. We then define
$$ U = \bigcup_{N_0 \geq 0} \bigcap_{N\geq N_0} \bigcup_{\substack{1\leq m \leq 2^N-1 \\(m-1, m, m+1) \in R_N^3}} \Big[\frac{m}{2^N}, \frac{m+1}{2^N}\Big]. $$
Since~$\sum_{N\geq 0} (1-|R_N|/2^N) < \infty$, the set~$U$ has Lebesgue measure~$1$. Now let~$x\in U$ and~$\ee>0$ be fixed. Let~$N_0 = N_0(\ee) \geq 1/\ee^3$ be an integer, such that~$x \in \cap_{N\geq N_0} \cup_{(m-1, m, m+1) \in R_N^3} [\frac{m}{2^N}, \frac{m+1}{2^N}]$. Then for~$0<h\leq 2^{-N_0}$, whenever~$2^{-1-N}\leq h \leq 2^{-N}$, we have
$$ \Big[\frac m{2^{N+1}}, \frac{m+1}{2^{N+1}}\Big] \subset [x-h, x+h] \subset \Big[\frac{m'-1}{2^N}, \frac{m'+2}{2^N}\Big] $$
for some integers~$(m, m')$ satisfying~$m'\in R_{N+1}$ and~$(m-1, m, m+1)\in R_N^3$. From~\eqref{eq:diffLogPhi-2alpha}, we deduce
$$ \e^{-N(2\alpha + O(\ee))} \leq \left|\Phi(x+h) - \Phi(x-h)\right| \leq 3 \e^{-N(2\alpha - O(\ee))}. $$
Since both the left-hand side and the right-hand side are of the order~$h^{2\alpha/\log 2 + O(\ee)}$, we conclude that the Lipschitz exponent of~$\Phi$ at any~$x\in U$ is equal to~$2\alpha/\log 2$. Additionally,~$\Phi$ and~$\Psi$ are inverse bijections, therefore the Lipschitz exponent of~$\Psi$ at any~$x\in V := \Phi(U)$ is~$\log 2/(2\alpha)$. Because~$\mu(V)$ is equal to the Lebesgue measure of~$U$, which is $1$, we obtain that on $V$ the Lipschitz exponent of~$\Psi$ is~$\beta$. 

Finally, one has
$$
V=\bigcup_{N_0 \geq 0} \bigcap_{N\geq N_0} \bigcup_{\substack{1\leq m \leq 2^N-1 \\(m-1, m, m+1) \in R_N^3}} \bigg[\frac{s(m+1)}{s(2^N+m+1)} , \frac{s(m)}{s(2^N+m)}\bigg]
$$
and, for any fixed $u\geq0$
\begin{align*}
\sum_{\substack{1\leq m \leq 2^N-1 \\(m-1, m, m+1) \in R_N^3}} \bigg| \frac{s(m)}{s(2^N+m)}-\frac{s(m+1)}{s(2^N+m+1)} \bigg|^{u}&=2^N(1+O(N^{-2}))e^{ -2 u N \alpha + o(N)}\\[-1em]
&=e^{ (\log 2-2 u\alpha) N +o(N)}.
\end{align*}
by~\eqref{eq:diffLogPhi-2alpha}. The fact that $V$ has Hausdorff dimension $\leq\beta=\frac{\log2}{2\alpha}$ then follows immediately. To prove the opposite inequality, we proceed as in~\cite{Kinney} and observe that since $\Psi$ has Lipschitz exponent~$\beta$, then for all $\varepsilon>0$ there exist $h>0$ and a subset $V'\subseteq V$ with $\mu(V')\geq\frac12$, say, such that
\begin{equation}\label{dm}
\Psi(x+h',x-h')=\mu\big([x-h',x+h']\big)<(2h')^{\beta-\varepsilon}
\end{equation}
for all $x\in V$ and all $0<h'\leq h$. Now let $\{I_i\mid i\in I\}$ be a cover of $V'$ with intervals $I_i$ satisfying $|I_i|\leq h$. Then, taking $x_i\in V'\cap I_i$ we have $I_i\subseteq I_i':=[x_i-|I_i|,x_i+|I_i|]$ and $|I_i'|\leq 2|I_i|$. In particular, by~\eqref{dm},
$$
\frac12\leq \mu(V')\leq \sum_{i\in I}\mu(I'_i)<\sum_{i\in I}(2|I'_i|)^{\beta-\varepsilon} \leq \sum_{i\in I}(4|I'_i|)^{\beta-\varepsilon}.
$$
This implies that the Hausdorff dimension of $V'$ (and henceforth that of $V$) is $\geq\beta-\varepsilon$. Since $\varepsilon$ is arbitrary, the result follows.

\subsection{\texorpdfstring{The constant~$2\alpha$ as a metric entropy}{The constant 2alpha as a metric entropy}}

We now explain how one may interpret formula~\eqref{eq:expect} as the computation of a certain partition entropy. This may be used to derive another expression for~$\alpha$.

Recall that the Farey map~$\F$ was defined at~\eqref{eq:def-Farey} (see Figure~\ref{fig:graph-gauss-0}). Also, in this section only, we denote by~$\nu$ the Lebesgue measure on~$[0, 1]$. We start with the partition
$$ [0,1) = \bigvee_{k=0}^{N} F^{-k}[0, 1) = \bigcup_{m=0}^{2^N-1} J_{m,N}, $$
where~$J_{m,N}$ are consecutive segments; explicitly, we have~$J_{m,N} =[\Phi (\tfrac{m}{2^N}), \Phi (\tfrac{m+1}{2^N}))$. By~\eqref{eq:gap-phi} we have
$$ \nu(J_{m,N}) = \frac1{s(2^N+m)s(2^N+m+1)}, \qquad \mu(J_{m,N})=2^{-N}. $$
Since~$s(2^{N+1})=s(2^N)$, we deduce
\begin{equation}
-\frac12 \sum_{m=0}^{2^N-1} \mu(J_{m,N}) \log \nu(J_{m,N}) = \frac2{2^N} \sum_{m=0}^{2^N-1} \log s(2^N + m) .\label{eq:alpha-partition}
\end{equation}
The left-hand side can be seen as a relative entropy with respect to the partition~$(J_{m,N})_m$. As~$N$ tends to infinity, we can adapt the proof of Rokhlin's formula~\cite[Theorem~12.10]{PY} to show that
\begin{equation}
\label{eq:alpha-partition-limit}
-\frac1{N} \sum_{m=0}^{2^N-1} \mu(J_{m,N}) \log \nu(J_{m,N}) \longrightarrow \int_0^1 \log |\F'(x)|\dd\mu(x).
\end{equation}
To see this, note first that the R\'enyi condition~\cite[formula~(12.2)]{PY} holds in our case in the weaker form
$$ \sup_{(x, y) \in [0, 1]^2} \Big|\frac{\psi'(x)}{\psi'(y)}\Big| \leq N^{O(1)} $$
for any inverse branch~$\psi$ of~$\F^N$; the steps~(i) and~(ii) on page~133 of~\cite{PY} are easily carried out and yield~\eqref{eq:alpha-partition-limit}.

For the equations~\eqref{eq:expect}, \eqref{eq:alpha-partition} and~\eqref{eq:alpha-partition-limit} to agree, the equality~\eqref{eq:alpha-entropy} must hold. Indeed, since~$\log|F'(x)| = -2\log(\max\{x, 1-x\})$ and~$\dd\mu(1-x)=-\dd\mu(x)$, we deduce
\begin{equation*}
\int_0^1\log\left|\F'(x)\right|\dd\mu(x) = -4\int_{1/2}^1 (\log x) \dd\mu(x) = 2\int_0^1 \log(1+x)\dd\mu(x)
\end{equation*}
where we have used the equality~$\H[t\mapsto \1_{t>1/2} \log t](x) = -\hf \log(1+x)$. By Lemma~\ref{lem:alpha-log1px-log}, the claimed equality~\eqref{eq:alpha-entropy} follows.

\subsection{\texorpdfstring{The constant~$\alpha$ as a Lyapunov exponent}{The constant alpha as a Lyapunov exponent}}

We have already noted in~\eqref{eq:stern-product-matrices} that the quantity~$\log \S_N$ may be expressed as a random product in the following way: define, as earlier, the measure~$\eta$ on~$GL_2(\R)$ by~$\eta = \frac12\delta_{A_0}  + \frac12\delta_{A_1}$, where $A_0 = \left(\begin{smallmatrix} 1 & 1 \\ 0 & 1 \end{smallmatrix}\right)$ and~$A_1 = \left(\begin{smallmatrix} 1 & 0 \\ 1 & 1 \end{smallmatrix}\right)$. Let also~$v = \left( \begin{smallmatrix} 1 \\ 1 \end{smallmatrix} \right)$, and~$f(x) = x \cdot \left(\begin{smallmatrix} 0 \\ 1 \end{smallmatrix}\right)$. Then
$$ \log \S_N = \log |f(g_N \cdots g_1 v)| $$
where~$g_j\in GL_2(\R)$ are taken independently at random according to~$\eta$ (which simply means $g_j=A_0$ or~$A_1$ with equal probability). Then, by definition, the mean-value $\alpha$ of $\log \S_N $ should coincide with the first Lyapunov exponent~$\lambda_1$ of~$\eta$; by a formula due to Furstenberg~\cite[Theorem~8.5]{Furstenberg}, we have
\begin{equation}
\lambda_1 = \int_{GL_2(\R) \times \P(\R^2)} \log\frac{|f(gv)|}{|f(v)|} \dd \eta(g) \dd\xi(v) \label{eq:expr-lambda1}
\end{equation}
for any measure~$\xi$ on~$\P(\R^2)$ with~$\eta \ast \xi = \xi$. Let us now explain how this expression may be used to recover the formula~\eqref{eq:alpha-log}, which amounts to finding an admissible measure~$\xi$. Paramatrizing~$\P(\R^2) = \R \cup \{\infty\}$, consider the Minkowski measure~$\mu$ as being defined on all~$\R\cup\{\infty\}$ but supported on~$[0,1]$. Let~$T = \left(\begin{smallmatrix} & 1 \\ 1 & \end{smallmatrix}\right)$; note that~$TA_1 = A_0T$. Moreover, the Minkowski measure satisfies
$$ \mu = \tfrac12(A_1^* \mu + (A_1T)^* \mu). $$
Indeed, we have~$\H^* \mu = \mu$, but from the definition of~$\H$,
$$ \H^* \mu = \sum_{n\geq 1} 2^{-n} (TA_0^n)^* \mu = \tfrac12 (A_1T)^* \mu + \tfrac12 A_1^* \H^*\mu $$
since~$TA_0^n = A_1TA_0^{n-1}$. Define~$\xi$ as
$$ \xi = \tfrac12 \mu + \tfrac12 (T^* \mu). $$
A small computation, using~$A_0 = TA_1T$ and~$A_0T = TA_1$, yields
\begin{align*}
\eta \ast \xi {}& = \tfrac14(A_1^* \mu + (A_1T)^*\mu + (TA_1T)^* \mu +  (TA_1)^* \mu) \\
{}& = \tfrac12 \mu + \tfrac12 T^* \mu = \xi
\end{align*}
so the conditions for the integral~\eqref{eq:expr-lambda1} to hold are met. The integral on the right-hand side of~\eqref{eq:expr-lambda1} then reads
\begin{align*}
\lambda_1 = {}& \int_{\R_+} \tfrac12 \log|f(A_0\! \left(\begin{smallmatrix} x \\ 1 \end{smallmatrix}\right)) \  f(A_1\! \left(\begin{smallmatrix} x\\ 1 \end{smallmatrix}\right))| \dd \xi(x) \\
= {}& \frac12\int_0^\infty \log(1+x) \dd\xi(x) \\
= {}& \frac14\int_0^1 (\log(1+x) + \log(1+x^{-1})) \dd\mu(x) %\\
%= {}& \frac12\int_0^1 \log(1+x)\dd\mu(x) - \frac14 \int_0^1 (\log x)\dd\mu(x)
\end{align*}
which evaluates to~$\alpha$ by~\eqref{eq:alpha-log} and Lemma~\ref{lem:alpha-log1px-log}.

\subsection{Explicit expression for the variance}\label{sec:logrhosecond-explicit}

We recall that the variance in Theorem~\ref{th:distrib} is related to~$\tau\mapsto \rho(\tau)$ by formula~\eqref{eq:rel-variance-logrho}. The expression stated in Theorem~\ref{th:alpha}.\ref{list:logrho-explicit} immediately follows using formula~\eqref{eq:ellsecond-explicit}. Note that the alternative expression
\begin{equation*}
\sigma^2 = \frac12\int_0^1 (\log x + \alpha\lfloor\tfrac1x\rfloor)^2\dd\mu(x) - \int_0^1\int_0^1 (\log x + \alpha\lfloor\tfrac1x\rfloor)\log(1+xy)\dd\mu(x)\dd\mu(y)
\end{equation*}
is particularly well-suited for numerical computation on the lines of~\cite{Bacher}.

\subsubsection*{Acknowledgements}

The authors are grateful to B. Vall\'ee and to the anonymous referee for numerous insightful remarks on earlier versions of this manuscript. SD thanks \'E. de Panafieu, J. Emme, P. Hubert, S. Troubetzkoy and G. Merlet for helpful discussions. Part of this work was done during a visit of SD at University of Genova, and a visit of LS at Aix-Marseille University. Both institutions are thanked for their support. The work of the SB is partially supported by  PRIN ``Number Theory and Arithmetic Geometry''.
% The work of LS is partially supported by the ANR-FWF Project ``MuDeRa''.

% BIBLIOGRAPHY
% ===========================================

% \bibliographystyle{amsalpha}
% \bibliography{distrib}

\begin{thebibliography}{{Mor}15}

\bibitem[AS03]{AS-Automatic}
J.-P. Allouche and J.~Shallit, \emph{Automatic sequences}, Cambridge University
  Press, Cambridge, 2003, Theory, applications, generalizations.

\bibitem[Bac17]{Bacher}
R.~Bacher, \emph{The {Stern} sequence and moments of {Minkowski's} question
  mark function}, arXiv:1703.07268, 2017.

\bibitem[Bal00]{Baladi}
V.~Baladi, \emph{Positive transfer operators and decay of correlations},
  Advanced Series in Nonlinear Dynamics, vol.~16, World Scientific Publishing
  Co., Inc., River Edge, NJ, 2000.

\bibitem[Bel54]{Bellman}
R.~Bellman, \emph{Limit theorems for non-commutative operations. {I}}, Duke
  Math. J. \textbf{21} (1954), 491--500.

\bibitem[BQ16a]{BQ}
Y.~Benoist and J.-F. Quint, \emph{Central limit theorem for linear groups},
  Ann. Probab. \textbf{44} (2016), no.~2, 1308--1340.

\bibitem[BQ16b]{BQ-book}
\bysame, \emph{Random walks on reductive groups}, Ergebnisse der Mathematik und
  ihrer Grenzgebiete. 3. Folge. A Series of Modern Surveys in Mathematics
  [Results in Mathematics and Related Areas. 3rd Series. A Series of Modern
  Surveys in Mathematics], vol.~62, Springer, Cham, 2016.

\bibitem[Bro96]{Broise}
A.~Broise, \emph{Transformations dilatantes de l'intervalle et th\'eor\`emes
  limites}, Ast\'erisque (1996), no.~238, 1--109, \'Etudes spectrales
  d'op\'erateurs de transfert et applications.

\bibitem[BV05]{BV}
V.~Baladi and B.~Vall\'ee, \emph{Euclidean algorithms are {G}aussian}, J.
  Number Theory \textbf{110} (2005), no.~2, 331--386.

\bibitem[CS17]{CS}
M.~Coons and L.~Spiegelhofer, \emph{The maximal order of
  hyper-(b-ary)-expansions}, Electron. J. Combin. \textbf{24} (2017), no.~1,
  P1.15.

\bibitem[CT14]{CT}
M.~Coons and J.~Tyler, \emph{The maximal order of {S}tern's diatomic sequence},
  Mosc. J. Comb. Number Theory \textbf{4} (2014), no.~3, 3--14.

\bibitem[CW00]{CW}
N.~Calkin and H.~S. Wilf, \emph{Recounting the rationals}, Amer. Math. Monthly
  \textbf{107} (2000), no.~4, 360--363.

\bibitem[Def16]{Defant}
C.~Defant, \emph{Upper bounds for {S}tern's diatomic sequence and related
  sequences}, Electron. J. Combin. \textbf{23} (2016), no.~4, Paper 4.8, 47.

\bibitem[Den38]{Denjoy}
A.~Denjoy, \emph{Sur une fonction r\'eelle de {Minkowski}}, J. Math. Pures
  Appl. IX. S\'er. \textbf{17} (1938), 105--151.

\bibitem[Dod17]{Dodds}
P.~S. Dodds, \emph{On the thermodynamic formalism for the {Farey} map},
  arXiv:1701.04486, 2017.

\bibitem[EH18]{EH}
J.~Emme and P.~Hubert, \emph{Central limit theorem for probability measures
  defined by sum-of-digits function in base 2}, Ann. Sc. Norm. Super. Pisa Cl.
  Sci. (2018), to appear.

\bibitem[FMT07]{FMT}
G.~Froyland, R.~Murray, and D.~Terhesiu, \emph{Efficient computation of
  topological entropy, pressure, conformal measures, and equilibrium states in
  one dimension}, Phys. Rev. E (3) \textbf{76} (2007), no.~3, 036702, 5.

\bibitem[FS09]{FS}
P.~Flajolet and R.~Sedgewick, \emph{Analytic combinatorics}, Cambridge
  University Press, Cambridge, 2009.

\bibitem[Fur63]{Furstenberg}
H.~Furstenberg, \emph{Noncommuting random products}, Trans. Amer. Math. Soc.
  \textbf{108} (1963), 377--428.

\bibitem[GKP94]{GKP}
R.~L. Graham, D.~E. Knuth, and O.~Patashnik, \emph{Concrete mathematics},
  second ed., Addison-Wesley Publishing Company, Reading, MA, 1994, A
  foundation for computer science.

\bibitem[Hen94]{Hensley}
D.~Hensley, \emph{The number of steps in the {E}uclidean algorithm}, J. Number
  Theory \textbf{49} (1994), no.~2, 142--182.

\bibitem[Hwa96]{Hwang2}
H.-K. Hwang, \emph{Large deviations for combinatorial distributions. {I}.
  {C}entral limit theorems}, Ann. Appl. Probab. \textbf{6} (1996), no.~1,
  297--319.

\bibitem[Hwa98]{Hwang1}
\bysame, \emph{On convergence rates in the central limit theorems for
  combinatorial structures}, European J. Combin. \textbf{19} (1998), no.~3,
  329--343.

\bibitem[Iso02]{Isola}
S.~Isola, \emph{On the spectrum of {F}arey and {G}auss maps}, Nonlinearity
  \textbf{15} (2002), no.~5, 1521--1539.

\bibitem[Kat95]{Kato}
T.~Kato, \emph{Perturbation theory for linear operators}, Classics in
  Mathematics, Springer-Verlag, Berlin, 1995, Reprint of the 1980 edition.

\bibitem[Kin60]{Kinney}
J.~R. Kinney, \emph{Note on a singular function of {M}inkowski}, Proc. Amer.
  Math. Soc. \textbf{11} (1960), 788--794.

\bibitem[KMS16]{KMS}
M.~Kesseb\"ohmer, S.~Munday, and B.~O. Stratmann, \emph{Infinite ergodic theory
  of numbers}, De Gruyter Graduate, De Gruyter, Berlin, 2016.

\bibitem[KS07]{KS-Diophantine}
M.~Kesseb\"ohmer and B.~O. Stratmann, \emph{A multifractal analysis for
  {S}tern-{B}rocot intervals, continued fractions and {D}iophantine growth
  rates}, J. Reine Angew. Math. \textbf{605} (2007), 133--163.

\bibitem[KS08]{KS-Minkowski}
\bysame, \emph{Fractal analysis for sets of non-differentiability of
  {M}inkowski's question mark function}, J. Number Theory \textbf{128} (2008),
  no.~9, 2663--2686.

\bibitem[Lag92]{Lagarias}
J.~C. Lagarias, \emph{Number theory and dynamical systems}, The unreasonable
  effectiveness of number theory ({O}rono, {ME}, 1991), Proc. Sympos. Appl.
  Math., vol.~46, Amer. Math. Soc., Providence, RI, 1992, pp.~35--72.

\bibitem[Lan14]{Lansing}
J.~Lansing, \emph{Largest values for the {S}tern sequence}, J. Integer Seq.
  \textbf{17} (2014), no.~7, Article 14.7.5, 18.

\bibitem[Leh29]{Lehmer}
D.~H. Lehmer, \emph{On {S}tern's {D}iatomic {S}eries}, Amer. Math. Monthly
  \textbf{36} (1929), no.~2, 59--67.

\bibitem[May91]{Mayer}
D.~H. Mayer, \emph{Continued fractions and related transformations}, Ergodic
  theory, symbolic dynamics, and hyperbolic spaces ({T}rieste, 1989), Oxford
  Sci. Publ., Oxford Univ. Press, New York, 1991, pp.~175--222.

\bibitem[{Min}05]{Minkowski}
H.~{Minkowski}, \emph{{Zur Geometrie der Zahlen.}}, {Verh. d. 3. intern.
  Math.-Kongr. Heidelb., 164-173 (1905).}, 1905.

\bibitem[{Mor}15]{Morris}
I.~D. {Morris}, \emph{{A short proof that the number of division steps in the
  Euclidean algorithm is normally distributed}}, arXiv:1502.07616, 2015.

\bibitem[MR17]{MR-preimages}
M.~Misiurewicz and A.~Rodrigues, \emph{Counting preimages}, Ergodic Theory
  Dynam. Systems (2017), to appear.

\bibitem[MZ04]{MZ}
N.~Moshchevitin and A.~Zhigljavsky, \emph{Entropies of the partitions of the
  unit interval generated by the {F}arey tree}, Acta Arith. \textbf{115}
  (2004), no.~1, 47--58.

\bibitem[Pan08]{Panti}
G.~Panti, \emph{Multidimensional continued fractions and a {M}inkowski
  function}, Monatsh. Math. \textbf{154} (2008), no.~3, 247--264.

\bibitem[Pau17]{Paulin}
R.~Paulin, \emph{Largest values of the {S}tern sequence, alternating binary
  expansions and continuants}, J. Integer Seq. \textbf{20} (2017), no.~2, Art.
  17.2.8, 13.

\bibitem[Pre03]{Prellberg}
T.~Prellberg, \emph{Towards a complete determination of the spectrum of a
  transfer operator associated with intermittency}, J. Phys. A \textbf{36}
  (2003), no.~10, 2455--2461.

\bibitem[PS92]{PS}
T.~Prellberg and J.~Slawny, \emph{Maps of intervals with indifferent fixed
  points: thermodynamic formalism and phase transitions}, J. Statist. Phys.
  \textbf{66} (1992), no.~1-2, 503--514.

\bibitem[PY98]{PY}
M.~Pollicott and M.~Yuri, \emph{Dynamical systems and ergodic theory}, London
  Mathematical Society Student Texts, vol.~40, Cambridge University Press,
  Cambridge, 1998.

\bibitem[Rue94]{Ruelle}
D.~Ruelle, \emph{Dynamical zeta functions for piecewise monotone maps of the
  interval}, CRM Monograph Series, vol.~4, American Mathematical Society,
  Providence, RI, 1994.

\bibitem[Rue04]{Ruelle-thermo}
\bysame, \emph{Thermodynamic formalism}, second ed., Cambridge Mathematical
  Library, Cambridge University Press, Cambridge, 2004, The mathematical
  structures of equilibrium statistical mechanics.

\bibitem[Sal43]{Salem}
R.~Salem, \emph{On some singular monotonic functions which are strictly
  increasing}, Trans. Amer. Math. Soc. \textbf{53} (1943), 427--439.

\bibitem[Ste58]{Stern}
M.~Stern, \emph{{\"U}ber eine zahlentheoretische {F}unktion}, J. Reine Angew.
  Math. \textbf{55} (1858), 193--220.

\bibitem[Val03]{Vallee}
B.~Vall\'ee, \emph{Dynamical analysis of a class of {E}uclidean algorithms},
  Theoret. Comput. Sci. \textbf{297} (2003), no.~1-3, 447--486, Latin American
  theoretical informatics (Punta del Este, 2000).

\end{thebibliography}

\providecommand{\bysame}{\leavevmode\hbox to3em{\hrulefill}\thinspace}
\providecommand{\MR}{\relax\ifhmode\unskip\space\fi MR }
% \MRhref is called by the amsart/book/proc definition of \MR.
\providecommand{\MRhref}[2]{%
  \href{http://www.ams.org/mathscinet-getitem?mr=#1}{#2}
}
\providecommand{\href}[2]{#2}

\end{document}